\numberwithin{equation}{section}
\numberwithin{figure}{section}
\theoremstyle{plain}
\newtheorem{thm}{\protect\theoremname}[section]
\theoremstyle{definition}
\newtheorem{defn}[thm]{\protect\definitionname}
\theoremstyle{plain}
\newtheorem{fact}[thm]{\protect\factname}
\theoremstyle{remark}
\newtheorem{rem}[thm]{\protect\remarkname}
\theoremstyle{plain}
\newtheorem{prop}[thm]{\protect\propositionname}
\theoremstyle{remark}
\newtheorem{claim}[thm]{\protect\claimname}
\theoremstyle{lemma}
\newtheorem{question}[thm]{\protect\questionname}
\theoremstyle{definition}
\newtheorem{example}[thm]{\protect\examplename}
\theoremstyle{definition}
\newtheorem{problem}[thm]{\protect\problemname}
\theoremstyle{plain}
\newtheorem{cor}[thm]{\protect\corollaryname}
\theoremstyle{plain}
\newtheorem{lem}[thm]{\protect\lemmaname}
\providecommand{\claimname}{Claim}
\providecommand{\corollaryname}{Corollary}
\providecommand{\definitionname}{Definition}
\providecommand{\examplename}{Example}
\providecommand{\factname}{Fact}
\providecommand{\lemmaname}{Lemma}
\providecommand{\problemname}{Problem}
\providecommand{\propositionname}{Proposition}
\providecommand{\questionname}{Question}
\providecommand{\remarkname}{Remark}
\providecommand{\theoremname}{Theorem}
\begin{document}
\global\long\def\p{\mathbf{p}}%
\global\long\def\q{\mathbf{q}}%
\global\long\def\C{\mathfrak{C}}%
\global\long\def\SS{\mathcal{P}}%
 
\global\long\def\pr{\operatorname{pr}}%
\global\long\def\image{\operatorname{im}}%
\global\long\def\otp{\operatorname{otp}}%
\global\long\def\dec{\operatorname{dec}}%
\global\long\def\suc{\operatorname{suc}}%
\global\long\def\pre{\operatorname{pre}}%
\global\long\def\qe{\operatorname{qf}}%
 
\global\long\def\ind{\operatorname{ind}}%
\global\long\def\Nind{\operatorname{Nind}}%
\global\long\def\lev{\operatorname{lev}}%
\global\long\def\Suc{\operatorname{Suc}}%
\global\long\def\HNind{\operatorname{HNind}}%
\global\long\def\minb{{\lim}}%
\global\long\def\concat{\frown}%
\global\long\def\cl{\operatorname{cl}}%
\global\long\def\tp{\operatorname{tp}}%
\global\long\def\id{\operatorname{id}}%
\global\long\def\cons{\left(\star\right)}%
\global\long\def\qf{\operatorname{qf}}%
\global\long\def\ai{\operatorname{ai}}%
\global\long\def\dtp{\operatorname{dtp}}%
\global\long\def\acl{\operatorname{acl}}%
\global\long\def\nb{\operatorname{nb}}%
\global\long\def\limb{{\lim}}%
\global\long\def\leftexp#1#2{{\vphantom{#2}}^{#1}{#2}}%
\global\long\def\intr{\operatorname{interval}}%
\global\long\def\atom{\emph{at}}%
\global\long\def\I{\mathfrak{I}}%
\global\long\def\uf{\operatorname{uf}}%
\global\long\def\ded{\operatorname{ded}}%
\global\long\def\Ded{\operatorname{Ded}}%
\global\long\def\Df{\operatorname{Df}}%
\global\long\def\Th{\operatorname{Th}}%
\global\long\def\eq{\operatorname{eq}}%
\global\long\def\Aut{\operatorname{Aut}}%
\global\long\def\ac{ac}%
\global\long\def\DfOne{\operatorname{df}_{\operatorname{iso}}}%
\global\long\def\modp#1{\pmod{#1}}%
\global\long\def\sequence#1#2{\left\langle #1\,\middle|\,#2\right\rangle }%
\global\long\def\set#1#2{\left\{  #1\,\middle|\,#2\right\}  }%
\global\long\def\Diag{\operatorname{Diag}}%
\global\long\def\Nn{\mathbb{N}}%
\global\long\def\mathrela#1{\mathrel{#1}}%
\global\long\def\twiddle{\mathord{\sim}}%
\global\long\def\mathordi#1{\mathord{#1}}%
\global\long\def\Qq{\mathbb{Q}}%
\global\long\def\dense{\operatorname{dense}}%
\global\long\def\Rr{\mathbb{R}}%
 
\global\long\def\cof{\operatorname{cof}}%
\global\long\def\tr{\operatorname{tr}}%
\global\long\def\treeexp#1#2{#1^{\left\langle #2\right\rangle _{\tr}}}%
\global\long\def\x{\times}%
\global\long\def\forces{\Vdash}%
\global\long\def\Vv{\mathbb{V}}%
\global\long\def\Uu{\mathbb{U}}%
\global\long\def\tauname{\dot{\tau}}%
\global\long\def\ScottPsi{\Psi}%
\global\long\def\cont{2^{\aleph_{0}}}%
\global\long\def\MA#1{{MA}_{#1}}%
\global\long\def\rank#1#2{R_{#1}\left(#2\right)}%
\global\long\def\cal#1{\mathcal{#1}}%

%% Independence 
\def\Ind#1#2{#1\setbox0=\hbox{$#1x$}\kern\wd0\hbox to 0pt{\hss$#1\mid$\hss} \lower.9\ht0\hbox to 0pt{\hss$#1\smile$\hss}\kern\wd0} 
\def\Notind#1#2{#1\setbox0=\hbox{$#1x$}\kern\wd0\hbox to 0pt{\mathchardef \nn="3236\hss$#1\nn$\kern1.4\wd0\hss}\hbox to 0pt{\hss$#1\mid$\hss}\lower.9\ht0 \hbox to 0pt{\hss$#1\smile$\hss}\kern\wd0} 
\def\nind{\mathop{\mathpalette\Notind{}}} 

\global\long\def\ind{\mathop{\mathpalette\Ind{}}}%
\global\long\def\dom{\operatorname{Dom}}%
 
\global\long\def\nind{\mathop{\mathpalette\Notind{}}}%
\global\long\def\average#1#2#3{Av_{#3}\left(#1/#2\right)}%
\global\long\def\Ff{\mathfrak{F}}%
\global\long\def\mx#1{Mx_{#1}}%
\global\long\def\maps{\mathfrak{L}}%

\global\long\def\Esat{E_{\mbox{sat}}}%
\global\long\def\Ebnf{E_{\mbox{rep}}}%
\global\long\def\Ecom{E_{\mbox{com}}}%
\global\long\def\BtypesA{S_{\Bb}^{x}\left(A\right)}%

\global\long\def\init{\trianglelefteq}%
\global\long\def\fini{\trianglerighteq}%
\global\long\def\Bb{\cal B}%
\global\long\def\Lim{\operatorname{Lim}}%
\global\long\def\supp{\operatorname{supp}}%

\global\long\def\SquareClass{\cal M}%
\global\long\def\leqstar{\leq_{*}}%
\global\long\def\average#1#2#3{Av_{#3}\left(#1/#2\right)}%
\global\long\def\cut#1{\mathfrak{#1}}%
\global\long\def\measureSpace#1#2{\mathfrak{M}_{#1}\left(#2\right)}%
\global\long\def\NTPT{\text{NTP}_{2}}%
\global\long\def\Zz{\mathbb{Z}}%
\global\long\def\TPT{\text{TP}_{2}}%
\global\long\def\Av{\operatorname{Av}}%

\global\long\def\OurSequence{\mathcal{I}}%
\global\long\def\Ff{\mathbb{F}}%
\global\long\def\Cc{\mathbb{C}}%

%\linenumbers
\title{Some NIP-like phenomena in NTP$_{2}$ }
\author{Itay Kaplan and Pierre Simon}
\begin{abstract}
We introduce the notion of an NTP$_{2}$-smooth measure and prove
that they exist assuming NTP$_{2}$. Using this, we propose a notion
of distality in NTP$_{2}$ that unfortunately does not intersect simple
theories trivially. We then prove a finite alternation theorem for
a subclass of NTP$_{2}$ that contains resilient theories. In the
last section we prove that under NIP, any type over a model of singular
size is finitely satisfiable in a smaller model, and ask if a parallel
result (with non-forking replacing finite satisfiability) holds in
NTP$_{2}$.
\end{abstract}

\thanks{The first author would like to thank the Israel Science Foundation
for partial support of this research (Grants no. 1533/14, 804/22). }
\thanks{Partially supported by ValCoMo (ANR-13-BS01-0006), NSF (grant DMS
1665491), and the Sloan foundation.}
\address{Itay Kaplan \\
The Hebrew University of Jerusalem\\
Einstein Institute of Mathematics \\
Edmond J. Safra Campus, Givat Ram\\
Jerusalem 91904, Israel}
\email{kaplan@math.huji.ac.il}
\urladdr{https://itaykaplan.huji.ac.il/}
\address{Pierre Simon\\
UC Berkeley\\
Mathematics Department, Evans Hall\\
Berkeley, CA, 94720-3840\\
USA}
\email{pierre.simon@berkeley.edu}
\urladdr{http://www.normalesup.org/\textasciitilde simon/}
\subjclass[2010]{03C45, 03C95, 03C55}
\maketitle

\section{Introduction}

In recent years, a lot of attention has been given to unstable classes
of first order theories. In particular, NIP and to a lesser extent
NTP$_{2}$. The former, NIP, is a very important class of theories
which was studied extensively, see \cite{pierrebook}. The latter,
NTP$_{2}$, is a class of theories which contains both simple and
NIP theories. Examples include the ultraproduct of the $p$-adics
\cite{MR3129735}, bounded PRC fields \cite{MR3564381} and valued
fields with a generic automorphism \cite{MR3273459}. Though it is
a very large class of theories, some general nontrivial results were
nonetheless attained. For example, in \cite{Kachernikov} it is proved
that forking and dividing agree over models, and \cite{MR3226015}
contains an independence theorem for NTP$_{2}$. Under the assumption
of groups or fields, more has been done. See for example \cite{MR3272764}
(about groups and fields in general NTP$_{2}$), \cite{MR3625116}
(about definable envelopes of subgroups), and more recently \cite{Montenegro2016}
(about groups definable in bounded PRC fields). 

Roughly speaking the ideology guiding our results on NTP$_{2}$ is
that it is NIP for forking. We exhibit this in two instances. 

1) In Section \ref{sec:On-NTP-smooth-measures} we introduce the notion
of an NTP$_{2}$-smooth Keisler measure. For any theory $T$ and $M\models T$,
a Keisler measure $\mu$ over $M$ is \emph{smooth} if for every $N\succ M$,
there is a \emph{unique }extension of $\mu$ to $\measureSpace xN$
(the space of all measures over $N$, see Definition \ref{def:finitely additive measure}
below). This notion turned out to be very important in the study of
measures in NIP theories (see \cite[Section 7.3]{pierrebook}) so
it is natural to find a parallel notion for NTP$_{2}$. As per our
guiding ideology, we say that $\mu\in\measureSpace xM$ is NTP$_{2}$-smooth
if for every extension $\mu'\in\measureSpace xN$ of $\mu$, if $\varphi\left(x,a\right)$
divides (equivalently forks, see Remark \ref{rem:dividing iff forking for smooth measures})
over $M$ then $\mu'\left(\varphi\left(x,a\right)\right)=0$. We then
prove that such measures exist: every Keisler measure over a model
in an NTP$_{2}$ theory can be extended to an NTP$_{2}$-smooth one.
In the last part of this section, Subsection \ref{subsec:NTP-distal},
we try to define a suitable notion of NTP$_{2}$-distality, and provide
two equivalent definitions (one of them using NTP$_{2}$-smooth measures).
However, it intersects simple theories, and thus this definition will
probably have to be refined. 

2) In Section \ref{sec:finite alternation} we prove a finite alternation
result. Recall that in NIP, just by definition, there is no indiscernible
sequence $\sequence{a_{i}}{i<\omega}$, a formula $\varphi\left(x,y\right)$
and $b$ such that $\varphi\left(b,a_{i}\right)$ holds iff $i$ is
even. We introduce a class of theories, which we call $\omega$-resilient
theories (containing resilient theories), and prove that assuming
both NTP$_{2}$ and $\omega$-resilience, if $\sequence{a_{i}}{i<\omega}$
is an indiscernible sequence and $\varphi\left(x,b\right)$ divides
over $I=\sequence{a_{2i}}{i<\omega}$, then for all but finitely many
$i$'s, $\neg\varphi\left(a_{i},b\right)$ holds. Note that this holds
if $T$ is simple (see just below Theorem \ref{thm:finite alternation in NTP2}). 

Finally, in Section \ref{sec:On-singular-local} we move to NIP theories,
and prove that a kind of local character result holds there, which
we call ``singular local character''. Namely, if $p\in S\left(M\right)$
and $\left|M\right|$ is singular with cofinality greater than $\left|T\right|$,
then $p$ is finitely satisfiable over $N\prec M$ of smaller cardinality.
In particular, $p$ does not fork over $N$. Since the last statement
is trivially true for simple theories, it is natural to ask whether
this is true for NTP$_{2}$ (see Question \ref{que:generalizing theorem to NTP2}). 

We would like to thank Saharon Shelah for Example \ref{exa:Random graph not local character for coheir}
and for his help with Theorem \ref{thm:singular local character over one model}
and in particular for turning \textquotedblleft non-dividing\textquotedblright{}
into \textquotedblleft finitely satisfiable\textquotedblright . We
would also like to thank the anonymous referee for their extremely
careful reading and useful comments, and Timo Krisam for his comments. 

\section{Preliminaries}

We recall the basic definitions of NIP and NTP$_{2}$. For a thorough
discussion of NIP and its importance, we refer the reader to \cite{pierrebook}.
The class NTP$_{2}$ is also discussed there, but we also add \cite{MR3129735}. 
\begin{defn}
A complete theory $T$ is \emph{NIP} if there is no formula $\varphi\left(x,y\right)$
with the \emph{independence property} (\emph{IP}), where $\varphi$
has IP if, in some $M\models T$ there are $\sequence{a_{i}}{i<\omega}$
and $\sequence{b_{s}}{s\subseteq\omega}$ such that $M\models\varphi\left(a_{i},b_{s}\right)$
iff $i\in s$. 
\end{defn}

\begin{defn}
A formula $\varphi\left(x,y\right)$ has the \emph{tree property of
the second kind} (\emph{TP}$_{2}$) if there is an array $\sequence{a_{i,j}}{i,j<\omega}$
and some $k<\omega$ such that every vertical path is consistent (for
every $\eta:\omega\to\omega$, $\set{\varphi\left(x,a_{i,\eta\left(i\right)}\right)}{i<\omega}$
is consistent) and every row is $k$-inconsistent ($\set{\varphi\left(x,a_{i,j}\right)}{j<\omega}$
is $k$-inconsistent). A complete theory $T$ is \emph{NTP}$_{2}$
if no formula has TP$_{2}$. 
\end{defn}

Our notations are standard, e.g., $T$ will denote some complete first-order
theory and $\C$ will be its monster model. 

\section{\label{sec:On-NTP-smooth-measures}On NTP$_{2}$-smooth measures
and a possible definition for NTP$_{2}$-distal theories}

Recall that a \emph{finitely additive probability measure} on a Boolean
algebra $B$ is a function $\mu:B\to\left[0,1\right]$ such that $\mu\left(1\right)=1$,
$\mu\left(x^{c}\right)=1-\mu\left(x\right)$ and $\mu\left(x\vee y\right)=\mu\left(x\right)+\mu\left(y\right)$
whenever $x\wedge y=0$. 
\begin{defn}
\label{def:finitely additive measure}Suppose that $A$ is a set of
parameters in some model $M$. A \emph{Keisler measure }(or just a
\emph{measure}) over $A$ in the variable $x$ is a finitely additive
probability measure on $L_{x}\left(A\right)$: the Boolean algebra
of definable sets in $x$ over $A$. We denote the space of measures
in $x$ over $A$ by $\measureSpace xA$. 
\end{defn}

\begin{fact}
\label{fact:extending of measures} \cite[Lemma 7.3]{pierrebook}Let
$\Omega\subseteq L_{x}\left(A\right)$ be a sub-algebra of $L_{x}\left(A\right)$
(i.e., closed under intersection, union and complement, and contains
$x=x$). Let $\mu$ be a finitely additive probability measure on
$\Omega$. Then $\mu$ extends to a Keisler measure over $A$. 
\end{fact}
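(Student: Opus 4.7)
The plan is a standard extension-by-Zorn argument that reduces to a one-step extension problem. Let $\mathcal{E}$ be the poset of pairs $(\Omega',\mu')$ where $\Omega\subseteq\Omega'\subseteq L_{x}(A)$ is a Boolean subalgebra and $\mu'$ is a finitely additive probability measure on $\Omega'$ with $\mu'\restriction\Omega=\mu$, ordered by extension. Chains are bounded above by the union of the underlying subalgebras endowed with the uniquely determined measure, so Zorn's lemma yields a maximal element $(\Omega^{*},\mu^{*})$. It then suffices to prove $\Omega^{*}=L_{x}(A)$, and I would do this by showing that any proper subalgebra admits a proper extension, contradicting maximality.

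Assume for contradiction that some $\varphi\in L_{x}(A)\setminus\Omega^{*}$ exists, and let $\Omega^{**}$ denote the Boolean subalgebra generated by $\Omega^{*}\cup\{\varphi\}$. Every element of $\Omega^{**}$ admits a disjunctive normal form $(\varphi\wedge\psi_{1})\vee(\neg\varphi\wedge\psi_{2})$ with $\psi_{1},\psi_{2}\in\Omega^{*}$, so an extension $\mu^{**}$ is ultimately determined by the value $r:=\mu^{**}(\varphi)$ together with how $\mu^{**}$ splits each $\mu^{*}(\psi)$ between $\varphi\wedge\psi$ and $\neg\varphi\wedge\psi$. A necessary condition is that $r\in[r^{-},r^{+}]$, where
\[
r^{-}=\sup\{\mu^{*}(\psi):\psi\in\Omega^{*},\,\psi\vdash\varphi\},\qquad r^{+}=\inf\{\mu^{*}(\psi):\psi\in\Omega^{*},\,\varphi\vdash\psi\};
\]
monotonicity of $\mu^{*}$, applied to any $\psi_{1}\vdash\varphi\vdash\psi_{2}$ (so that $\psi_{1}\vdash\psi_{2}$), yields $r^{-}\leq r^{+}$, so such an $r$ can be chosen.

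The main obstacle is to verify that some choice of $r\in[r^{-},r^{+}]$ really extends to a genuine finitely additive measure on all of $\Omega^{**}$: one must define $\mu^{**}(\varphi\wedge\psi)$ for every $\psi\in\Omega^{*}$ coherently (non-negative, additive in $\psi$, and summing to $\mu^{*}(\psi)$ with $\mu^{**}(\neg\varphi\wedge\psi)$) and check that the bounds $r^{-}\leq r\leq r^{+}$ make this possible. The slickest way to handle this step is via the Hahn--Banach theorem, which also short-circuits the Zorn step: identify $\mu$ with the positive normalised linear functional $f\mapsto\int f\,d\mu$ on the space $V_{0}$ of $\Omega$-simple real-valued functions on the Stone space of $L_{x}(A)$, and extend it linearly to the space of all simple functions using the sublinear majorant $p(f)=\sup f$, on which $\mu\leq p$. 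The resulting linear functional $\tilde\mu$ automatically satisfies $\inf f\leq\tilde\mu(f)\leq\sup f$, hence is positive and normalised; restricting $\tilde\mu$ to characteristic functions $\mathbf{1}_{\chi}$ for $\chi\in L_{x}(A)$ yields the required Keisler measure extending $\mu$.
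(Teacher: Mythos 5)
Your final argument is correct, but note that the paper itself gives no proof of this statement: it is quoted as a Fact with a citation to \cite[Lemma 7.3]{pierrebook}, so there is nothing internal to compare against except the standard proofs of this extension lemma (in the cited source and elsewhere the usual routes are either a compactness argument --- every finite subalgebra of $L_{x}\left(A\right)$ is atomic, so the finitely many required conditions are satisfiable, and one concludes by compactness of $\left[0,1\right]^{L_{x}\left(A\right)}$ --- or the Horn--Tarski one-step extension iterated via Zorn). Your first half follows that second route but, as you yourself flag, stops exactly at the nontrivial point: choosing $r\in\left[r^{-},r^{+}\right]$ is not yet an extension, since one must define $\mu^{**}$ on all elements $\left(\varphi\wedge\psi_{1}\right)\vee\left(\neg\varphi\wedge\psi_{2}\right)$ and verify additivity, which requires the inner/outer-measure bookkeeping of Horn--Tarski. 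The Hahn--Banach argument you substitute is, however, complete and self-contained: the functional $f\mapsto\int f\,d\mu$ on $\Omega$-simple functions is well defined and dominated by the sublinear map $p\left(f\right)=\sup f$ (write $f$ over a partition into atoms of the finite subalgebra generated by its pieces), the extension $\tilde{\mu}$ then satisfies $\inf f\leq\tilde{\mu}\left(f\right)\leq\sup f$, and evaluating on characteristic functions gives positivity, normalisation $\tilde{\mu}\left(\mathbf{1}\right)=1$, and finite additivity from $\mathbf{1}_{\chi\vee\chi'}=\mathbf{1}_{\chi}+\mathbf{1}_{\chi'}$ for disjoint $\chi,\chi'$. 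What this buys you is a proof that bypasses both the transfinite induction and the delicate one-step coherence check, at the cost of invoking Hahn--Banach (itself a choice principle) rather than the purely combinatorial finite-approximation argument; either way the statement of the Fact is fully justified.
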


\subsection{NTP$_{2}$-smooth measures}

In this section we will define an analogous notion to smooth measures
from NIP in the NTP$_{2}$-context. The main result is that every
measure can be extended to an NTP$_{2}$-smooth measure, assuming
NTP$_{2}$. 
\begin{rem}
\label{rem:smooth measures in NIP}Recall (see \cite[Definition 7.7]{pierrebook})
that if $M\models T$ then $\mu\in\measureSpace xM$ is \emph{smooth}
if for every $N\succ M$, there is a \emph{unique }extension of $\mu$
to $\measureSpace xN$. If $\mu\in\measureSpace xN$ and $M\prec N$
then $\mu$ is \emph{smooth over $M$} if the restriction $\mu|_{M}$
is smooth. We can also extend this definition to any set of parameters,
working in $\C$: $\mu\in\measureSpace xA$ is smooth if it has a
unique extension to $\measureSpace x{\C}$. 
\end{rem}

\begin{rem}
\label{rem:invariant measures}For a global measure $\mu\in\measureSpace x{\C}$,
$\mu$ is called \emph{$A$-invariant} for some set $A$ if whenever
$b\equiv_{A}c$ we have $\mu\left(\varphi\left(x,b\right)\right)=\mu\left(\varphi\left(x,c\right)\right)$.
In NIP, by \cite[Proposition 7.15]{pierrebook}, $\mu\in\measureSpace x{\C}$
is invariant over a model $M$ iff for every formula $\varphi\left(x,c\right)$
which forks (or divides) over $M$, $\mu\left(\varphi\left(x,c\right)\right)=0$
(in this case we say that $\mu$ does not fork over $M$). 
\end{rem}

\begin{defn}
\label{def:NTP2 smooth}A Keisler measure $\mu\in\measureSpace xA$
is called \emph{NTP$_{2}$-smooth} if for every $A\subseteq N$ and
any extension of $\mu$ to $\mu'\in\measureSpace xN$, $\mu'$ does
not divide over $A$: if $\varphi\left(x,b\right)$ divides over $A$
then $\mu'\left(\varphi\left(x,b\right)\right)=0$. When $\mu\in\measureSpace xN$
and $A\subseteq N$ we say that $\mu$ is\emph{ NTP$_{2}$-smooth}
over $A$ if $\mu|_{A}$ is NTP$_{2}$-smooth. 
\end{defn}

\begin{rem}
\label{rem:dividing iff forking for smooth measures}A measure $\mu\in\measureSpace xA$
is NTP$_{2}$-smooth iff for every $A\subseteq N$ and any extension
of $\mu$ to $\mu'\in\measureSpace xN$, if $\varphi\left(x,b\right)\in L_{x}\left(N\right)$
\emph{forks} over $A$ then $\mu'\left(\varphi\left(x,b\right)\right)=0$.
To see this, note that if $\varphi\left(x,b\right)$ forks over $A$,
then we can extend the measure $\mu'$ to include in its domain the
dividing formulas whose disjunction $\varphi\left(x,b\right)$ implies,
so all of these must have measure zero. 
\end{rem}

\begin{fact}
\label{fact:poitive indiscernible is consistent}\cite[Lemma 7.5]{pierrebook}If
$M$ is a model, $\sequence{b_{i}}{i<\omega}$ is an indiscernible
sequence of tuples in $M$ and $\mu\in\measureSpace xM$ is such that
$\mu\left(\varphi\left(x,b_{i}\right)\right)\geq\varepsilon>0$ for
all $i$, then $\set{\varphi\left(x,b_{i}\right)}{i<\omega}$ is consistent. 
\end{fact}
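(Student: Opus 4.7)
My plan is to argue by contradiction via a simple counting inequality for finitely additive measures. Assume $\set{\varphi(x,b_{i})}{i<\omega}$ is inconsistent. By compactness some finite subfamily is already inconsistent, and indiscernibility of $\sequence{b_{i}}{i<\omega}$ then upgrades this to: there is a smallest $k$ such that every $k$-element subfamily is inconsistent. Equivalently, no element of $\C$ belongs to more than $k-1$ of the sets $\varphi(\C,b_{i})$.

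The key step is the following general inequality: if $A_{0},\ldots,A_{N-1}$ lie in the Boolean algebra on which a finitely additive probability measure $\mu$ is defined, and no point lies in more than $k-1$ of them, then $\sum_{i<N}\mu(A_{i})\leq k-1$. To see this, restrict attention to the finite Boolean subalgebra generated by $A_{0},\ldots,A_{N-1}$ and set $c_{S}:=\mu\!\left(\bigcap_{i\in S}A_{i}\cap\bigcap_{i\notin S}A_{i}^{c}\right)$ for $S\subseteq\{0,\ldots,N-1\}$. Finite additivity together with the disjointness of atoms gives
\[
\sum_{i<N}\mu(A_{i})=\sum_{S}|S|\,c_{S}.
\]
Whenever $|S|\geq k$, picking any $k$-element $T\subseteq S$ yields $\bigcap_{i\in S}A_{i}\subseteq\bigcap_{i\in T}A_{i}=\emptyset$, so $c_{S}=0$; the right-hand side is therefore at most $(k-1)\sum_{S}c_{S}=k-1$.

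Feeding $A_{i}:=\varphi(x,b_{i})$ into this bound produces $N\varepsilon\leq\sum_{i<N}\mu(\varphi(x,b_{i}))\leq k-1$, which is absurd once $N>(k-1)/\varepsilon$. I do not expect any real obstacle; the only subtlety to keep in mind is that $\mu$ is merely finitely additive, but since all computations take place inside a finite Boolean subalgebra generated by $N$ sets, this poses no problem. The essential use of indiscernibility is in converting the compactness-supplied \emph{``some finite subfamily is inconsistent''} into the uniform \emph{``every $k$-subfamily is inconsistent''}, which is precisely what powers the counting inequality.
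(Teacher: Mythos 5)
Your argument is correct, and it is essentially the standard proof of this fact: the paper itself gives no proof (it cites \cite[Lemma 7.5]{pierrebook}), and the argument there is the same one you give \textemdash{} indiscernibility plus compactness upgrades inconsistency to $k$-inconsistency, and then a counting bound for a finitely additive measure ($\sum_{i<N}\mu(\varphi(x,b_i))\leq k-1$ when every $k$ of the sets have empty intersection) contradicts $\mu(\varphi(x,b_i))\geq\varepsilon$ for large $N$. Your atom-by-atom verification of the counting inequality inside the finite subalgebra is a fine way to make the finitely-additive bookkeeping explicit.
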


\begin{prop}
\label{prop:in NIP smooth =00003D NTP2 smooth}If $T$ is NIP and
$M\models T$, then $\mu\in\measureSpace xM$ is smooth iff it is
NTP$_{2}$-smooth.
\end{prop}

\begin{proof}
Suppose that $\mu$ is smooth, and $\mu'$ extends $\mu$ to $\measureSpace xN$,
and $\mu'\left(\varphi\left(x,b\right)\right)>0$ with $\varphi\left(x,b\right)$
dividing over $M$. Assuming that $N$ is $\left|M\right|^{+}$-saturated,
it contains an indiscernible sequence $\sequence{b_{i}}{i<\omega}$
over $M$ which witnesses dividing. As $\mu$ is smooth, it follows
that $\mu'\left(\varphi\left(x,b_{i}\right)\right)=\mu'\left(\varphi\left(x,b\right)\right)$
for all $i$ (otherwise, by applying an automorphism fixing $M$ taking
$b_{i}$to $b$ we get measures extending $\mu$ giving distinct values
to $\varphi\left(x,b\right)$). Together we get a contradiction to
Fact \ref{fact:poitive indiscernible is consistent}. This direction
does not need NIP nor that $M$ is a model.

On the other hand, if $\mu$ is NTP$_{2}$-smooth then it is smooth:
first, by \cite[Proposition 7.9]{pierrebook}, we know that $\mu$
can be extended to a smooth measure in $\measureSpace xN$ for some
$N\succ M$. Extend it even further to $\mu'\in\measureSpace x{\C}$.
By definition $\mu'$ is NTP$_{2}$-smooth over $M$ and by NIP (and
Remark \ref{rem:invariant measures}) $\mu'$ is $M$-invariant. Now,
\cite[Lemma 7.17]{pierrebook} tells us that that $\mu'$ is smooth
over $M$ so we are done.
\end{proof}
As was said in the proof of Proposition \ref{prop:in NIP smooth =00003D NTP2 smooth},
in NIP, every measure can be extended to a smooth one. The analogous
statement in NTP$_{2}$ is then the following. 
\begin{thm}
\label{thm:every measure can be extended to a smooth }Suppose that
$T$ is NTP$_{2}$ and $M\models T$. Any Keisler measure $\mu\in\measureSpace xM$
can be extended to an NTP$_{2}$-smooth measure over some $N\succ M$. 
\end{thm}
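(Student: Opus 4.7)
The plan is to construct $\mu'$ in one stroke by prescribing $\mu'(\chi)=0$ for every formula $\chi$ over $N$ that divides over $M$; saturation of $N$ will then upgrade this to give NTP$_{2}$-smoothness. The NTP$_{2}$ hypothesis enters exactly once, through the Chernikov-Kaplan theorem that forking equals dividing over models (the result mentioned right after \cite{Kachernikov} in the introduction).

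Pick $N\succ M$ that is $|N|^{+}$-saturated (in a sufficiently large monster $\C$), and set $\mathcal{D}=\{\psi(x,b):b\in N,\ \psi(x,b)\text{ divides over }M\}$. Let $\Omega\subseteq L_{x}(N)$ be the Boolean subalgebra generated by $L_{x}(M)\cup\mathcal{D}$. Extend $\mu$ to $\mu'$ on $\Omega$ by setting $\mu'|_{L_{x}(M)}=\mu$ and $\mu'(\chi)=0$ for every $\chi\in\mathcal{D}$, extending finitely additively. The only nontrivial well-definedness check is that whenever $\theta\in L_{x}(M)$ and $\theta\vdash\chi_{1}\vee\cdots\vee\chi_{n}$ with $\chi_{1},\ldots,\chi_{n}\in\mathcal{D}$, one has $\mu(\theta)=0$. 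Such a $\theta$ implies a finite disjunction of formulas dividing over $M$, so $\theta$ forks over $M$; by Chernikov-Kaplan applied to the model $M$, $\theta$ itself divides over $M$; but $\theta$ has no parameters outside $M$, so it must be inconsistent, forcing $\mu(\theta)=0$. With this, $\mu'$ is a consistently defined finitely additive probability measure on $\Omega$, and Fact~\ref{fact:extending of measures} yields an extension $\mu'\in\measureSpace xN$.

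To verify NTP$_{2}$-smoothness of $\mu'$, take any $\psi(x,c)\in L_{x}(\C)$ dividing over $N$. If $c\in N$, then $\psi(x,c)\in\mathcal{D}$ (dividing over $N$ implies dividing over $M$), so $\mu'(\psi(x,c))=0$ and its outer measure with respect to $L_{x}(N)$ is trivially $0$. If $c\notin N$, use $|N|^{+}$-saturation of $N$ to find $c'\in N$ with $c'\equiv_{N}c$; then $\psi(x,c')$ also divides over $N$, so $\mu'(\psi(x,c'))=0$. Since the outer measure of $\psi(x,c)$ with respect to $L_{x}(N)$ depends only on $\tp(c/N)$ (the condition $\psi(x,c)\vdash\theta$ for $\theta\in L_{x}(N)$ is determined by $\tp(c/N)$), it coincides with the outer measure of $\psi(x,c')$, which is at most $\mu'(\psi(x,c'))=0$. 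In either case every extension of $\mu'$ to a larger parameter set must assign $\psi(x,c)$ measure $0$, as required.

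\textbf{Main obstacle.} The crux is the finite consistency check in the second paragraph, which is precisely where NTP$_{2}$ is used via Chernikov-Kaplan. Without the equivalence of forking and dividing over models, there would be no reason for formulas in $L_{x}(M)$ that imply finite disjunctions of dividing formulas to have measure zero, and the simultaneous prescription $\mu'(\chi)=0$ for all $\chi\in\mathcal{D}$ would be inconsistent with $\mu$.
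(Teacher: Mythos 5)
There is a genuine gap, and it sits exactly where the theorem's difficulty lies. Your construction produces a measure $\mu'$ over $N$ that assigns $0$ to every formula \emph{with parameters in $N$} that divides over $M$. But NTP$_2$-smoothness of $\mu'$ (as a measure over $N$) is a statement about \emph{all further extensions} of $\mu'$ and about formulas dividing over $N$: you must show that no extension of $\mu'$ to a larger parameter set can give positive measure to a formula $\psi(x,c)$ with $c\notin N$ dividing over $N$. Your attempt to handle this case is not available: no infinite model is $|N|^{+}$-saturated (the type $\{x\neq a: a\in N\}$ is omitted), and more to the point, a type $\tp(c/N)$ with $c\notin N$ is in general not realized in $N$, so there is no $c'\in N$ with $c'\equiv_N c$; note also that if such a $c'$ did exist, then $\psi(x,c')$ would be a formula over $N$ dividing over $N$, hence inconsistent, so your argument would only ever cover a degenerate case. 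A concrete counterexample to the strategy: let $T$ be the random graph, $\mu=p\in S(M)$ a non-algebraic type, and let $\mu'$ be a complete non-forking extension $q\in S(N)$ of $p$; this $\mu'$ does kill every formula over $N$ dividing over $M$, yet for $c\models q$ with $c\notin N$ the type $q\cup\{x=c\}$ is a consistent extension giving measure $1$ to $x=c$, which divides over $N$. So killing dividing-over-$M$ formulas at the level of $N$ does not make the measure NTP$_2$-smooth over $N$, and no outer-measure computation over $L_x(N)$ can rescue this (in the example the relevant outer measure is $1$).

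This is why the paper's proof is not a one-step construction: it argues by contradiction, building a continuous increasing chain of models and measures of length $\left(\left|T\right|+2^{\aleph_0}\right)^{+}$ in which smoothness fails at every successor stage (a dividing formula of positive measure appears over the previous model), arranges a translation-invariance condition on the measures, extracts an indiscernible-with-respect-to-the-measure array, and then uses NTP$_2$ together with Fact \ref{fact:poitive indiscernible is consistent} to derive a contradiction. The Chernikov--Kaplan equality of forking and dividing over models, which is the only place your proposal uses NTP$_2$, is nowhere near strong enough: your use of it only rules out the trivial obstruction (formulas over $M$ of positive measure that fork over $M$), while the actual obstruction recurs at each new level of parameters and must be ruled out by the transfinite/array argument.
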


\begin{proof}
Suppose not. Construct an increasing continuous sequence of measures
and models 
\[
\sequence{\left(M_{\alpha},\mu_{\alpha}\right)}{\alpha<\left(\left|T\right|+2^{\aleph_{0}}\right)^{+}}
\]
 such that $M_{0}=M,\mu_{0}=\mu$ and for every $\alpha<\left(\left|T\right|+2^{\aleph_{0}}\right)^{+}$,
there are $\varphi_{\alpha}\left(x,y_{\alpha}\right)\in L$ and $b_{\alpha}\in M_{\alpha+1}$
such that $\varphi_{\alpha}\left(x,b_{\alpha}\right)$ divides over
$M_{\alpha}$ and $\mu_{\alpha+1}\left(\varphi_{\alpha}\left(x,b_{\alpha}\right)\right)=\varepsilon_{\alpha}>0$.
Also, for each $\alpha$ there is some $M_{\alpha}$-indiscernible
sequence $\bar{b}_{\alpha}=\sequence{b_{\alpha,i}}{i\in\Zz}\subseteq M_{\alpha+1}$
such that $b_{\alpha,0}=b_{\alpha}$ and $\set{\varphi\left(x,b_{\alpha,i}\right)}{i\in\Zz}$
is $k_{\alpha}$-inconsistent. 
\begin{claim}
\label{claim:wlog}In the construction we can make sure that the following
holds.
\end{claim}

\begin{itemize}
\item [$\left(\star\right)$]For each formula $\psi\left(x,y_{\alpha}\right)$
over $M_{\alpha}$, and every $i,j\in\Zz$, if $\mu_{\alpha+1}\left(\psi\left(x,b_{\alpha,i}\right)\right)>\varepsilon$
then $\mu_{\alpha+1}\left(\psi\left(x,b_{\alpha,j}\right)\right)\geq\varepsilon\cdot2^{-\left|i-j\right|-1}$.
In particular, $\mu_{\alpha+1}\left(\psi\left(x,b_{\alpha,i}\right)\right)>0$
iff $\mu_{\alpha+1}\left(\psi\left(x,b_{\alpha,j}\right)\right)>0$. 
\end{itemize}
\begin{proof}[Proof of Claim]
 By our assumption towards contradiction, in stage $\alpha<\left(\left|T\right|+2^{\aleph_{0}}\right)^{+}$
of the construction we can find $M_{\alpha+1}'$, $\varphi_{\alpha}$,
$b_{\alpha}$, $\sequence{b_{\alpha,i}}{i\in\Zz}$, $\varepsilon_{\alpha}$
and some $\mu_{\alpha+1}'$ which satisfy everything except $\left(\star\right)$.
To get $\left(\star\right)$, we let $\sigma\in\Aut\left(\C/M_{\alpha}\right)$
take $\sequence{b_{\alpha,i}}{i\in\Zz}$ to $\sequence{b_{\alpha,i+1}}{i\in\Zz}$,
and extend $M_{\alpha+1}'$ to $M_{\alpha+1}\succ M_{\alpha+1}'$
such that $\sigma\restriction M_{\alpha+1}\in\Aut\left(M_{\alpha+1}/M_{\alpha}\right)$.
Additionally, extend $\mu_{\alpha+1}'$ to $\measureSpace x{M_{\alpha+1}}$.
Now let 
\[
\mu_{\alpha+1}=\sum\set{2^{-\left|i\right|-2}\sigma^{i}\left(\mu_{\alpha+1}'\right)}{i\in\Zz\backslash\left\{ 0\right\} }+1/2\mu_{\alpha+1}'.
\]
Note that $\mu_{\alpha+1}\in\measureSpace x{M_{\alpha+1}}$ and it
extends $\mu_{\alpha}$. Let us check that $\left(\star\right)$ holds.
Suppose that $\mu_{\alpha+1}\left(\psi\left(x,b_{\alpha,i}\right)\right)>\varepsilon$.
Then without loss of generality 
\[
\sum_{k=0}^{\infty}2^{-k-2}\mu'_{\alpha+1}\left(\psi\left(x,b_{\alpha,k+i}\right)\right)>2^{-1}\varepsilon
\]
 (this is the ``positive side'' of this sum\footnote{In general the way the action of an automorphism $\sigma$ on a measure
$\mu$ is defined is via the expression $\sigma\left(\mu\right)\left(\varphi\right)=\mu\left(\sigma^{-1}\left(\varphi\right)\right)$,
and here ``positive'' refers to the indices in the indiscernible
sequence. }). If $j<i$ then the positive side of the sum which calculates $\mu_{\alpha+1}\left(\psi\left(x,b_{\alpha,j}\right)\right)$
is 
\[
\geq\sum_{k=i-j}^{\infty}2^{-k-2}\mu_{\alpha+1}'\left(\psi\left(x,b_{\alpha,k+j}\right)\right)>2^{j-i-1}\varepsilon.
\]

If $j>i$, then as 
\[
\mu_{\alpha+1}\left(\psi\left(x,b_{\alpha,j}\right)\right)\geq\sum_{k=i-j}^{\infty}2^{-\left|k\right|-2}\mu'_{\alpha+1}\left(\psi\left(x,b_{\alpha,k+j}\right)\right),
\]
 we get that it is 
\[
\geq2^{i-j}\sum_{k=0}^{j-i-1}2^{k-2}\mu'_{\alpha+1}\left(\psi\left(x,b_{\alpha,k+i}\right)\right)+2^{j-i}\cdot\sum_{k=j-i}^{\infty}2^{-k-2}\mu'_{\alpha+1}\left(\psi\left(x,b_{\alpha,k+i}\right)\right)
\]
 which is 
\[
\geq2^{i-j}\sum_{k=0}^{\infty}2^{-k-2}\mu'_{\alpha+1}\left(\psi\left(x,b_{\alpha,k+i}\right)\right)>2^{i-j-1}\varepsilon.
\]

This concludes the proof of the claim. 
\end{proof}
Now extract a sequence $\sequence{\left(M_{i},\mu_{i},\bar{b}_{i}\right)}{i<\omega}$
such that for some fixed formula $\varphi\left(x,y\right)$, $\varphi\left(x,b_{i,0}\right)$
divides over $M_{i}$ and even $k$-divides for a fixed $k$ as witnessed
by $\bar{b}_{i}$, and $\mu_{i+1}\left(\varphi\left(x,b_{i,0}\right)\right)>\varepsilon_{*}$
for some fixed $\varepsilon_{*}$. Let $\mu^{*}=\bigcup_{i<\omega}\mu_{i}$.

Next we extract a sequence $\sequence{\bar{b}'_{i}}{i<\omega}$ where
$\bar{b}'_{i}=\sequence{b'_{i,j}}{j\in\Zz}$ such that $\sequence{\bar{b}_{i}'}{i<\omega}$
is indiscernible with respect to $\mu^{*}$ realizing the $\mu^{*}$-EM-type
of $\sequence{\bar{b}_{i}}{i<\omega}$. Indiscernible with respect
to $\mu^{*}$ means this sequence is indiscernible and for all $i_{0}<\ldots<i_{n-1}$,
$\mu^{*}\left(\psi\left(x,\bar{b}'_{i_{1}},\ldots,\bar{b}_{i_{n}}'\right)\right)=\mu^{*}\left(\psi\left(x,\bar{b}_{0}',\ldots,\bar{b}_{n-1}'\right)\right)$.
Realizing the $\mu^{*}$-EM-type means realizing the EM-type and moreover,
if $\mu^{*}\left(\psi\left(x,\bar{b}_{0}',\ldots,\bar{b}_{n-1}'\right)\right)=\varepsilon$,
then for every $\delta>0$ there is an increasing tuple $i_{0}<\ldots<i_{n-1}<\omega$
with $\left|\mu^{*}\left(\psi\left(x,\bar{b}_{i_{0}},\ldots,\bar{b}_{i_{n-1}}\right)\right)-\varepsilon\right|<\delta$. 

How to construct such a sequence is explained in \cite[after Lemma 7.4 and proof of Lemma 7.5]{pierrebook}:
this can be seen as the usual application of Ramsey and compactness
to an expanded structure, where one adds a sort for the interval $\left[0,1\right]$
with addition and the order and function symbols $f_{\varphi}$ for
every formula $\varphi\left(x,y\right)$ taking a tuple $c$ to the
measure of $\varphi\left(x,c\right)$. 

We forgot $M_{i}$, but we still retain that $\set{\varphi\left(x,b_{i,j}'\right)}{j\in\Zz}$
is inconsistent (because we fixed $k$) and $\left(\star\right)$
of Claim \ref{claim:wlog} still holds (for $\varphi\left(x,y\right)$
defined over $b'_{<i}$). Rename $b_{i,j}'$ to $b_{i,j}$. 

Suppose that there was some $K<\omega$ with $\mu^{*}\left(\bigwedge_{i<K}\varphi\left(x,b_{i,i}\right)\right)>0$
but $\mu^{*}\left(\bigwedge_{i<K+1}\varphi\left(x,b_{i,i}\right)\right)=0$.
Then by $\left(\star\right)$ of Claim \ref{claim:wlog}, $\mu^{*}\left(\bigwedge_{i<K}\varphi\left(x,b_{i,i}\right)\wedge\varphi\left(x,b_{K,0}\right)\right)=0$,
so letting $\bar{c}_{l}=\sequence{b_{l+i,i}}{i<K}$ (for $l<\omega$)
and $\psi\left(x,\bar{y}\right)=\bigwedge_{i<K}\varphi\left(x,y_{i}\right)$
we get that $\sequence{\mu^{*}\left(\psi\left(x,\bar{c}_{Kl}\right)\right)}{l<\omega}$
is constant and positive, while $\mu^{*}\left(\psi\left(x,\bar{c}_{lK}\right)\wedge\psi\left(x,\bar{c}_{\left(l+1\right)K}\right)\right)=0$
for all $l<\omega$, but the total measure is 1 so this is impossible.

By NTP$_{2}$ and compactness there is some $N<\omega$ such that
there is no array $\sequence{a_{i,j}}{i<N,j<N}$ such that for every
$i<N$, $\set{\varphi\left(x,a_{i,j}\right)}{j<N}$ is $k$-inconsistent
and for every $\eta:N\to N$, $\set{\varphi\left(x,a_{i,\eta\left(i\right)}\right)}{i<N}$
is consistent. 

Suppose that the measure of the diagonal $\mu^{*}\left(\bigwedge_{i<N}\varphi\left(x,b_{i,i}\right)\right)$
is positive. 

In this case, by $\mu^{*}$-indiscernibility of $\sequence{\bar{b}_{i}}{i<\omega}$
and Fact \ref{fact:poitive indiscernible is consistent} (which is
applied to the sequence $\sequence{\sequence{b_{lN+i,i}}{i<N}}{l<\omega}$),
it follows that the set of all $N$-diagonals $\set{\bigwedge_{i<N}\varphi\left(x,b_{lN+i,i}\right)}{l<\omega}$
is consistent. In particular, for any $\eta:N\to N$, $\set{\varphi\left(x,b_{lN+\eta\left(l\right),\eta\left(l\right)}\right)}{l<N}$
is consistent (because this is a subset: we choose one element from
each diagonal). Hence, by indiscernibility of $\sequence{\bar{b}_{i}}{i<\omega}$
it follows that $\set{\varphi\left(x,b_{l,\eta\left(l\right)}\right)}{l<N}$
is consistent. 

This is a contradiction to the choice of $N$, hence $\mu^{*}\left(\bigwedge_{i<N}\varphi\left(x,b_{i,i}\right)\right)=0$.
But $\mu^{*}\left(\varphi\left(x,b_{0,0}\right)\right)>\varepsilon_{*}$,
so we can find some $K$ as above --- contradiction.
\end{proof}

\subsection{\label{subsec:NTP-distal}On a possible definition of NTP$_{2}$-distal
theories}

Distal theories form an important class of NIP theories. Defined and
studied in \cite{Distal}, they were studied further in \cite{Chernikov2015}
where some surprising combinatorial results were discovered. Distal
theories were given a ``set-theoretic'' characterization in terms
of the existence of saturated models in \cite{MR3651210}. 

We would like to suggest a possible definition of NTP$_{2}$-distal.
In the context of NIP, several equivalent definitions of distality
can be given. We will use the one which relates it to smooth measures.
We will see in the end that our proposed definition lacks an important
property of distal theories: in the NIP context, distal theories can
never be stable. Here we would like to have that NTP$_{2}$-distal
theories are never simple. This is not the case in our definition,
which raises the question of possible refinements. We will not deal
with this here. 

First let us give the more familiar definition. 
\begin{defn}
\label{def:distal}A theory $T$ is \emph{distal} if whenever $I_{1}+a+I_{2}$
is indiscernible, $I_{1}$, $I_{2}$ have no endpoints and $I_{1}+I_{2}$
is indiscernible over $A$, $I_{1}+a+I_{2}$ is indiscernible over
$A$. 
\end{defn}

Note that if $T$ is distal then $T$ is NIP (if not, then we can
find a formula $\varphi\left(x,y\right)$, an indiscernible sequence
$\sequence{a_{i}}{i<\omega}$ and $b$ such that $\varphi\left(a_{i},b\right)$
holds iff $i$ is even. Extracting, we may assume that the sequence
of pairs $\sequence{\left(a_{2i},a_{2i+1}\right)}{i<\omega}$ is indiscernible
over $b$. By compactness, we can find such a sequence of any order
type in which every element has a successor, and thus it is easy to
get a contradiction to distality).  

Given an indiscernible sequence $I=\sequence{a_{i}}{i\in\left[0,1\right]}$
where $a_{i}$ has the same length as the variable $x$, let $\Omega\subseteq L_{x}\left(\C\right)$
be the family of all definable sets $D\subseteq\C$ such that $D_{I}=\set{i\in\left[0,1\right]}{a_{i}\in D}$
is a Borel measurable set. Note that $\Omega$ is a Boolean algebra
(i.e., closed under intersections, unions and complements, and contains
$x=x$). Let $\lambda$ be the Lebesgue measure on $\left[0,1\right]$.
Then $\lambda$ induces a probability measure $\Av_{I}$ on $\Omega$
by setting $\Av_{I}\left(D\right)=\lambda\left(D_{I}\right)$. Note
that if $D$ is definable over $I$, then $D_{I}$ is a finite union
of intervals, so Borel, hence $\Av_{I}$ defines a Keisler measure
on $L_{x}\left(I\right)$, which we will naturally denote by $\Av_{I}|_{I}$
(the restriction of $\Av_{I}$ to $I$). 

In NIP theories, every $D_{I}$ is a finite union of intervals, so
that $\Av_{I}$ is a global Keisler measure. 
\begin{fact}
\label{fact:Distal iff all averages are smooth} \cite[Proposition 2.21]{Distal}A
complete theory $T$ is distal iff it is NIP and for all indiscernible
sequences $I=\sequence{a_{i}}{i\in\left[0,1\right]}$ and every model
$M$, $\Av_{I}|_{M}$ is smooth. 
\end{fact}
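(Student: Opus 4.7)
The plan is to reduce smoothness to the standard approximation criterion: $\mu\in\measureSpace xM$ is smooth iff for every $\varphi(x,b)$ over $\C$ and every $\varepsilon>0$ there are $\theta_{1},\theta_{2}\in L_{x}(M)$ with $\theta_{1}\to\varphi(x,b)\to\theta_{2}$ and $\mu(\theta_{2}\wedge\neg\theta_{1})<\varepsilon$. This reformulation makes the problem combinatorial: we need to sandwich $\varphi(x,b)$ between $M$-definable sets of nearly the same $\Av_{I}$-measure.

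For the forward direction, first observe (as was essentially noted after Definition \ref{def:distal}) that distality implies NIP. Now fix $I=\sequence{a_{i}}{i\in[0,1]}$, a model $M\supseteq I$ (after a harmless adjustment), and $\varphi(x,b)$. By NIP, $D_{I}=\set{i\in[0,1]}{\models\varphi(a_{i},b)}$ is a finite union of intervals, with finitely many endpoints $t_{1}<\cdots<t_{n}$ lying in the Dedekind completion of $[0,1]$. It suffices to produce, for each endpoint $t_{j}$ and each $\delta>0$, an $L_{x}(M)$-formula $\theta$ such that $\set{i}{\models\theta(a_{i})}$ agrees with $D_{I}$ outside $(t_{j}-\delta,t_{j}+\delta)$; summing over $j$ gives the required $\theta_{1},\theta_{2}$ with error at most $2n\delta$. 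The distality input enters here: if no such $\theta$ existed, then by Ramsey and compactness one could extract endpoint-free sub-sequences $I_{1}$ to the left of $t_{j}$ and $I_{2}$ to the right such that $I_{1}+I_{2}$ is indiscernible over $Mb$, yet any element $a^{*}$ realizing the cut at $t_{j}$ fails to be indiscernible with $I_{1},I_{2}$ over $Mb$ — because its $\varphi(\cdot,b)$-value is forced, while the $M$-type of the cut is not determined. This directly contradicts distality applied to $I_{1}+a^{*}+I_{2}$ over $A=Mb$.

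For the backward direction, contrapose. Suppose distality fails, as witnessed by an indiscernible $I_{1}+a+I_{2}$ with $I_{1},I_{2}$ endpoint-free, $I_{1}+I_{2}$ indiscernible over $A$, but $I_{1}+a+I_{2}$ not indiscernible over $A$; let $\varphi(x,b)$ with $b\in A$ witness the failure. Stretch this to an indiscernible sequence $J=\sequence{a_{i}}{i\in[0,1]}$ in which $a$ sits at some interior point $t$ and $J\setminus\{a\}$ remains $A$-indiscernible. Choose $M\supseteq A\cup(J\setminus\{a\})$ not containing $a$ itself. Then $\Av_{J}|_{M}$ admits two extensions to $\measureSpace x{Mb}$ — one tracking the $\varphi(\cdot,b)$-behavior of $J\setminus\{a\}$ from the left of the cut $t$, the other from the right — and these disagree on $\varphi(x,b)$ by choice of $\varphi$. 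By Fact \ref{fact:extending of measures} both extend to global measures, so $\Av_{J}|_{M}$ is not smooth.

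The principal obstacle is the forward direction, where one must convert the purely combinatorial statement of distality (preservation of indiscernibility over parameters) into an approximation statement about $M$-formulas sandwiching $\varphi(x,b)$. The delicate point is the uniform extraction of the ``bad cut'' configuration: one must control simultaneously the indiscernibility of $I_{1}+I_{2}$ over $Mb$ and the existence of a realization of the cut whose $\varphi(\cdot,b)$-value is forced to disagree with every $M$-formula approximation. This is exactly where a careful Ramsey/compactness argument, together with the characterization of NIP dividing the sequence into finitely many intervals, becomes essential.
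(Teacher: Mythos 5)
The paper itself gives no proof of this Fact --- it is quoted from \cite[Proposition 2.21]{Distal} --- so your argument can only be judged on its own terms, and as written it has genuine gaps in both directions. In the left-to-right direction, the reduction to the sandwich criterion is fine, but your construction never produces the implications that criterion needs: you build, for each endpoint $t_{j}$, an $M$-formula $\theta$ whose trace on $I$ agrees with $D_{I}$ away from $t_{j}$ and then ``sum over $j$''. Agreement on the points $a_{i}$ says nothing about the global implications $\theta_{1}\vdash\varphi\left(x,b\right)\vdash\theta_{2}$ in $\C$, and no mechanism for extracting genuine implications from $M$-formulas is offered --- this is exactly where the real work (honest-definition-type arguments) lies. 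Moreover, your appeal to distality is incoherent in the setting you chose: after the ``harmless adjustment'' $I\subseteq M$, no non-constant subsequence of $I$ can be indiscernible over $Mb\supseteq I$, so the configuration ``$I_{1}+I_{2}$ indiscernible over $Mb$ with $a^{*}$ filling the cut'' cannot be extracted from $I$ itself. The adjustment is also not harmless with respect to the literal statement: smoothness does not pass to restrictions, so smoothness of $\Av_{I}|_{M'}$ for $M'\supseteq M\cup I$ does not give smoothness of $\Av_{I}|_{M}$; indeed, if $M$ places all of $I$ inside a single cut (say in DLO, which is distal), then $\Av_{I}|_{M}$ is a nonrealized type and is not smooth, so the Fact has to be read with the base containing $I$, as the paper itself does when it applies it.

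In the right-to-left direction the proposed witness cannot work: you take $M\supseteq A\cup\left(J\setminus\left\{ a\right\} \right)$ and claim two extensions of $\Av_{J}|_{M}$ ``disagree on $\varphi\left(x,b\right)$'' --- but $b\in A\subseteq M$, so $\varphi\left(x,b\right)$ is already an $M$-formula whose value is fixed by $\Av_{J}|_{M}$, and no extension can change it; likewise ``tracking the $\varphi$-behavior from the left/right of the cut'' describes limit types at the cut, not extensions of the average measure. A witness to non-smoothness must be a formula with parameters outside $M$, essentially involving the cut-filling element $a$ (or external parameters over which the sequence is indiscernible), and the actual content is establishing the gap of Fact \ref{fact:extending measures}: one must show that every $I$-formula implied by the witness has average bounded below by a fixed positive quantity, which requires a parameter-moving/limit-type argument of the kind carried out in the proof of Theorem \ref{thm:NTP2 distal in terms of sequence}. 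Your sketch does not carry this out, so neither direction is established.
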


In fact, reading the proof in \cite[Proposition 2.21]{Distal}, we
get that $T$ is distal iff it is NIP and for all such $I$'s, $\Av_{I}|_{I}$
is smooth. Furthermore, NIP follows (so we don't need to assume it).
To see this directly, suppose that $\varphi\left(x,y\right)$ has
IP and that $I'=\sequence{a_{i}}{i\in\left[0,1\right]\times\left\{ 0,1\right\} }$
is an indiscernible sequence in $x$, ordered lexicographically, such
that for some $b$, $\varphi\left(a_{i},b\right)$ holds if and only
if the second coordinate of $i$ is 0. Now let $I=\sequence{a_{i}}{i\in\left[0,1\right]\times\left\{ 0\right\} }$.
Now we will use the following fact on extensions of measures.
\begin{fact}
\label{fact:extending measures}\cite[Lemma 7.4]{pierrebook} If $A$
is any set, $\mu\in\measureSpace xA$ and $\varphi\left(x,b\right)$
is some formula over $\C$, then for every $r\in\left[0,1\right]$
such that 
\[
\inf\set{\mu\left(\theta\right)}{\theta\in L_{x}\left(A\right),\theta\vdash\varphi\left(x,b\right)}\leq r\leq\sup\set{\mu\left(\theta\right)}{\theta\in L_{x}\left(A\right),\varphi\left(x,b\right)\vdash\theta}
\]
 there is an extension $\nu$ of $\mu$ to $\measureSpace x{\C}$
such that $\nu\left(\varphi\left(x,b\right)\right)=r$. 
\end{fact}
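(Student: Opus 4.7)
The plan is to extend $\mu$ first to the Boolean sub-algebra $\Omega$ of $L_x(\C)$ generated by $L_x(A)\cup\{\varphi(x,b)\}$, assigning $\varphi(x,b)$ the desired value $r$, and then invoke Fact \ref{fact:extending of measures} to extend further to a Keisler measure in $\measureSpace x{\C}$. Every element of $\Omega$ is logically equivalent to one of the form $(\theta_1\wedge\varphi(x,b))\vee(\theta_2\wedge\neg\varphi(x,b))$ with $\theta_1,\theta_2\in L_x(A)$, so defining $\nu$ on $\Omega$ amounts to choosing, for each $\theta\in L_x(A)$, a value $f(\theta)\in[0,\mu(\theta)]$ intended to be $\nu(\theta\wedge\varphi(x,b))$; then $\nu(\theta\wedge\neg\varphi(x,b))$ is forced to equal $\mu(\theta)-f(\theta)$.

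For the resulting $\nu$ to be well-defined and finitely additive, the function $f$ must satisfy: (i) $f$ is itself a finitely additive positive measure on $L_x(A)$ with $f\leq\mu$ pointwise; (ii) $f(\theta)=\mu(\theta)$ whenever $\theta\vdash\varphi(x,b)$, because $\theta\wedge\neg\varphi(x,b)$ is then inconsistent and must receive measure $0$; (iii) $f(\theta)=0$ whenever $\theta\vdash\neg\varphi(x,b)$; and (iv) $f(x=x)=r$. Granting such an $f$, one verifies from the normal form above that setting $\nu\bigl((\theta_1\wedge\varphi(x,b))\vee(\theta_2\wedge\neg\varphi(x,b))\bigr):=f(\theta_1)+(\mu(\theta_2)-f(\theta_2))$ is independent of representation and finitely additive on $\Omega$.

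The main step, and what I expect to be the principal obstacle, is constructing $f$. I would use a compactness argument. For a finite sub-algebra $\Omega_0\subseteq L_x(A)$ with atoms $\alpha_1,\dots,\alpha_n$, a local candidate is a tuple $(f(\alpha_1),\dots,f(\alpha_n))\in\prod_{i=1}^n[0,\mu(\alpha_i)]$ with $f(\alpha_i)=\mu(\alpha_i)$ when $\alpha_i\vdash\varphi(x,b)$, $f(\alpha_i)=0$ when $\alpha_i\vdash\neg\varphi(x,b)$, and $\sum_i f(\alpha_i)=r$. This set is non-empty precisely because $r$ is squeezed between $\mu\bigl(\bigvee\{\alpha_i:\alpha_i\vdash\varphi(x,b)\}\bigr)$ and $\mu\bigl(\bigvee\{\alpha_i:\alpha_i\not\vdash\neg\varphi(x,b)\}\bigr)$, which the hypothesis on $r$ guarantees (note that the nontrivial direction of the squeeze is that $r$ is at least the $\mu$-measure of any $\theta\in L_x(A)$ implying $\varphi(x,b)$ and at most that of any $\theta$ implied by $\varphi(x,b)$). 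Each candidate set is compact in $[0,1]^{\Omega_0}$, and the candidate sets are compatible under restriction, so by a standard inverse-limit compactness argument a global $f$ on all of $L_x(A)$ exists. Applying Fact \ref{fact:extending of measures} to the resulting $\nu$ on $\Omega$ yields the desired extension with $\nu(\varphi(x,b))=r$.
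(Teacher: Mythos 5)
Your argument is correct and complete modulo routine verifications that do check out: the reduction of the problem to finding a finitely additive $f$ on $L_{x}\left(A\right)$ with $0\leq f\leq\mu$, $f\left(\theta\right)=\mu\left(\theta\right)$ whenever $\theta\vdash\varphi\left(x,b\right)$, $f\left(\theta\right)=0$ whenever $\theta\vdash\neg\varphi\left(x,b\right)$ and $f\left(x=x\right)=r$ does yield a well-defined finitely additive probability measure on the algebra generated by $L_{x}\left(A\right)\cup\left\{ \varphi\left(x,b\right)\right\} $ (well-definedness is exactly additivity of $f$ together with the two boundary conditions), your finite-subalgebra compactness argument produces such an $f$, and Fact \ref{fact:extending of measures} then finishes. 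There is no in-paper proof to compare against: the paper quotes this as \cite[Lemma 7.4]{pierrebook} and only remarks that the proof there goes through over an arbitrary set $A$; your route (extend first to the generated subalgebra, then invoke the general extension fact) is essentially that standard argument.

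One point you should make explicit rather than attribute to the stated hypothesis: your nonemptiness check for a finite subalgebra uses that $\mu\left(\theta\right)\leq r$ for every $\theta\in L_{x}\left(A\right)$ with $\theta\vdash\varphi\left(x,b\right)$, and $r\leq\mu\left(\theta\right)$ for every $\theta$ with $\varphi\left(x,b\right)\vdash\theta$; that is, $\sup\set{\mu\left(\theta\right)}{\theta\vdash\varphi\left(x,b\right)}\leq r\leq\inf\set{\mu\left(\theta\right)}{\varphi\left(x,b\right)\vdash\theta}$. This is the hypothesis of \cite[Lemma 7.4]{pierrebook}, but it is not what the displayed inequality in the Fact literally says: as printed, the $\inf$ and $\sup$ are transposed, so the left-hand side is $0$ (witnessed by an inconsistent $\theta$) and the right-hand side is $1$ (witnessed by $x=x$), making the printed condition vacuous, and under it the conclusion would be false (e.g.\ for a realized type and a formula inconsistent with it one cannot have $\nu\left(\varphi\left(x,b\right)\right)=1$). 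So you have proved the intended statement, and the sentence claiming that ``the hypothesis on $r$ guarantees'' the squeeze should acknowledge this correction of the statement.
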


(Actually, the lemma in \cite[Lemma 7.4]{pierrebook} is stated only
when $A$ is a model, but the same proof goes through.)

By Fact \ref{fact:extending measures}, there are $2^{\aleph_{0}}$
extensions of $\Av_{I}|_{I}$: the infimum on the left is 0 (because
if $\theta\in L_{x}\left(I\right)$, $\theta\vdash\varphi\left(x,b\right)$
and $\Av\left(\theta\right)>0$ then $\theta$ contains an interval
in $I$, but then also an interval in $I'$, contradicting the choice
of $b$) while the supremum on the right is 1 (by considering the
negation of the formula $\theta$ in the argument for the infimum).

Thus we propose the following definition.
\begin{defn}
\label{def:NTP2 distal}Say that a theory $T$ is \emph{NTP$_{2}$-distal}
if it is NTP$_{2}$ and for every indiscernible sequence $I=\sequence{a_{i}}{i\in\left[0,1\right]}$,
$\Av_{I}|_{I}$ is NTP$_{2}$-smooth. 
\end{defn}

\begin{question}
Do we need to assume NTP$_{2}$ in Definition \ref{def:NTP2 distal}?
\end{question}

As with distal theories, we would like to have an equivalent definition
that does not use measures. 
\begin{thm}
\label{thm:NTP2 distal in terms of sequence} An NTP$_{2}$ theory
$T$ is NTP$_{2}$-distal iff for every dense indiscernible sequence
$I$ which we write as $I_{1}+I_{2}$, with $I_{1}$ endless and $I_{2}$
with no first element, if $I$ is $d$-indiscernible, $I_{1}+b+I_{2}$
is indiscernible and $\varphi\left(x,d\right)$ divides over $I$
then $\neg\varphi\left(b,d\right)$ holds.

Moreover, if $\varphi\left(x_{0},\ldots,x_{n-1},y\right)$ is a formula
over $I$ such that $\varphi\left(x_{0},\ldots,x_{n-1},d\right)$
divides over $I$ then $\neg\varphi$ is in the EM-type of $I_{1}+b+I_{2}$
over $dI$ (meaning that for all increasing tuples $a_{0},\ldots,a_{n-1}$
from $I_{1}+b+I_{2}$, $\varphi\left(a_{0},\ldots,a_{n-1}\right)$
does not hold). 
\end{thm}

\begin{proof}
Suppose first that $T$ is NTP$_{2}$-distal and let $I=I_{1}+I_{2}$,
$b$ and $\varphi\left(x,d\right)$ as in there and assume that $\varphi\left(b,d\right)$
holds. Let $k<\omega$ witness that $\varphi\left(x,d\right)$ divides
over $I$. We may assume that $I$ is countable (by taking a subsequence)
and ordered by $\left(0,1\right)\cap\Qq$ where $b$ is in some irrational
spot $\alpha\in\left(0,1\right)$. By compactness, we can extend $I$
to have order type $\left[0,1\right]\backslash\left\{ \alpha\right\} $
where $I_{1}=I_{<\alpha}$ and $I_{2}=I_{>\alpha}$ (note that the
conditions involved, that $I$ is $d$-indiscernible, that $\varphi\left(x,d\right)$
$k$-divides over $I$ and that $I_{1}+b+I_{2}$ is indiscernible
are type definable). Write $I=\sequence{a_{i}}{i\in\left[0,1\right]\backslash\left\{ \alpha\right\} }$.
Let $a_{\alpha}$ be such that $I'=I_{1}+a_{\alpha}+I_{2}$ is $d$-indiscernible
and $\varphi\left(x,d\right)$ divides over $I'$. Then by assumption,
$\Av_{I'}|_{I'}$ is NTP$_{2}$-smooth, which means that in every
extension $\mu$ of $\Av_{I'}|_{I'}$, $\mu\left(\varphi\left(x,d\right)\right)=0$.
By Fact \ref{fact:extending measures}, for every $0<\varepsilon$
there is some formula $\theta_{\varepsilon}\left(x\right)\in L_{x}\left(I'\right)$
such that $\varphi\left(x,d\right)\vdash\theta_{\varepsilon}$ and
$\Av_{I'}\left(\theta_{\varepsilon}\right)<\varepsilon$. Take $\varepsilon$
small enough so that $0<\alpha-\varepsilon<\alpha+\varepsilon<1$.
The formula $\theta_{\varepsilon}$ is over $I'$ so it defines a
union of intervals in $I'$, each with endpoints in the parameters
defining $\theta_{\varepsilon}$. Now we want to move those parameters
so that $\theta_{\varepsilon}$ is over $I_{<\alpha-\varepsilon}+I_{>\alpha+\varepsilon}$.
Since $I'$ is indiscernible over $d$, we will still have that $\varphi\left(x,d\right)\vdash\theta_{\varepsilon}$.
The challenge is to do it without increasing the measure. How? If
there are no indices in the ``forbidden zone'' $\left[\alpha-\varepsilon,\alpha+\varepsilon\right]$,
we are done. Otherwise we proceed by induction on the number of indices
there. Suppose $i$ is the leftmost index in the forbidden zone. Recall
that $\theta_{\varepsilon}$ defines a finite union of intervals in
$\left[0,1\right]$. If $i$ is an isolated point of this union, then
moving $a_{i}$ to $I_{<\alpha-\varepsilon}$ doesn't change the measure
of $\theta_{\varepsilon}$. If there is no open interval of the form
$\left(i,i+\delta\right)$ contained in this union, again move $a_{i}$
to $I_{<\alpha-\varepsilon}$ which may only decrease the measure.
If there is no open interval of the form $\left(i-\delta,i\right)$
there, move $a_{i}$ and all the points in the forbidden zone to $I_{>\alpha+\varepsilon}$
which again will only decrease the measure. Lastly, if there are intervals
in both sides of $i$, moving $a_{i}$ to $I_{<\alpha-\varepsilon}$
will not change the measure. Since $\varphi\left(b,d\right)$ holds,
$\theta_{\varepsilon}\left(b\right)$ holds, and as $I_{1}+b+I_{2}$
is indiscernible, $\theta_{\varepsilon}\left(a_{i}\right)$ holds
for all $i\in\left[\alpha-\varepsilon,\alpha+\varepsilon\right]$.
Hence $\Av_{I'}\left(\theta_{\varepsilon}\right)\geq2\varepsilon$
--- contradiction. 

On the other hand, suppose that that the condition on the right hand
side holds and we want to show that $T$ is NTP$_{2}$-distal. Let
$I=\sequence{a_{i}}{i\in\left[0,1\right]}$ be indiscernible. We want
to show that $\mu=\Av_{I}|_{I}$ is NTP$_{2}$-smooth. If $I$ is
constant then $\mu$ has a unique extension (the unique realized type
of the element in $I$), so we may assume it is not. The proof is
similar to what is done in the NIP case \cite[Proposition 2.21]{Distal},
and like there it relies on several steps. 

Let $\supp\left(\mu\right)=\set{p\in S\left(I\right)}{\varphi\in p\Rightarrow\mu\left(\varphi\right)>0}$,
in other words, the set of all weakly random types over $I$. 
\begin{claim}
\label{claim:every type in the support is a limit}If $p\in\supp\left(\mu\right)$
then $p=\lim^{+}\left(\alpha\right)$ or $\lim^{-}\left(\alpha\right)$
for some $\alpha\in\left[0,1\right]$. That is, either {[}there is
some $0\leq\alpha<1$ such that for every formula $\varphi\left(x\right)$
over $I$, $\varphi\in p$ iff for some $\alpha<\beta$, for all $\alpha<\gamma<\beta$,
$\varphi\left(a_{\gamma}\right)$ holds{]} or {[}there is $0<\alpha\leq1$
such that for every formula $\varphi\left(x\right)$ over $I$, $\varphi\in p$
iff for some $\beta<\alpha$, for all $\beta<\gamma<\alpha$, $\varphi\left(a_{\gamma}\right)$
holds{]}. 
\end{claim}

\begin{proof}[Proof of Claim.]
Note that for every formula $\varphi\left(x\right)$ over $I$, $\varphi$
defines a union of intervals from $\left[0,1\right]$ which abusing
notation we will denote as $\varphi\left(I\right)$. Let $\alpha\in\bigcap_{\varphi\in p}\cl\set{\beta\in\left[0,1\right]}{\models\varphi\left(a_{\beta}\right)}$,
which exists by compactness of $\left[0,1\right]$ and the fact that
$p\in\supp\left(\mu\right)$. For every formula $\varphi\in p$, either
$\alpha$ is an isolated point of $\varphi\left(I\right)$ or $\varphi\left(I\right)$
contains an open interval to the left or right of $\alpha$. Since
$p$ is closed under finite intersection and contains $x\neq a_{\alpha}$,
we can assume that for $\varphi\in p$, $\varphi$ contains an interval
to, say, the right of $\alpha$, and as both $p$ and $\lim^{+}\left(\alpha\right)$
are complete, $p=\lim^{+}\left(\alpha\right)$. 
\end{proof}
Suppose for contradiction that $\mu$ is not NTP$_{2}$-smooth. Then
there is a formula $\varphi\left(x,d\right)$ which divides over $I$
and some extension of $\mu$ which gives it positive measure. 

Let $\Sigma=\set{\theta\in L_{x}\left(I\right)}{\varphi\left(x,d\right)\vdash\theta}$.
By Fact \ref{fact:extending measures}, $\inf\set{\mu\left(\theta\right)}{\theta\in\Sigma}$
is positive. This means that we can find $p\in\supp\left(\mu\right)$
such that $p$ contains $\Sigma$ (consider $\Sigma\cup\set{\neg\psi\left(x\right)}{\psi\in L_{x}\left(I\right),\mu\left(\psi\right)=0}$
--- this set is consistent because $\Sigma$ is closed under finite
intersections, and a finite union of sets of measure 0 is still of
measure 0 --- and let $p$ be any complete type containing it). According
to \cite[Beginning of Section 7.1]{pierrebook}, $\mu$ can be thought
of as a $\sigma$-additive Borel measure on the space of types $S_{x}\left(I\right)$
such that for every closed set $F$, $\mu\left(F\right)=\inf\set{\mu\left(D\right)}{F\subseteq D,D\text{ clopen}}$.
In particular, $\mu\left(\Sigma\right)$ is positive (where we identify
$\Sigma$ with the set of types containing it). 
\begin{claim}
For every type $p\in S_{x}\left(I\right)$, $\mu\left(\left\{ p\right\} \right)=0$.
\end{claim}

\begin{proof}[Proof of Claim.]
 Note that $I$ is not totally indiscernible, as otherwise for every
$\alpha\in\left(0,1\right)$, $I_{<\alpha}+I_{>\alpha}$ is indiscernible
over $a_{\alpha}$ and $x=a_{\alpha}$ divides over $I_{<\alpha}+I_{>\alpha}$,
contradiction. Hence, there is a formula $\chi\left(x_{0},\ldots,x_{n-1}\right)$
and a tuple $\alpha_{0}<\ldots<\alpha_{n-1}\in\left[0,1\right]$,
and $i<i+1<n$ such that $\chi\left(a_{\alpha_{0}},\ldots,a_{\alpha_{n-1}}\right)$
holds but $\chi\left(a_{\alpha_{0}},\ldots,a_{\alpha_{i+1}},a_{\alpha_{i}},\ldots,a_{\alpha_{n-1}}\right)$
does not. Let $\alpha_{i-1}<\beta<\alpha_{i}<\alpha_{i+1}<\gamma<\alpha_{i+2}$.
Then by indiscernibility $\chi\left(a_{\alpha_{0}},\ldots a_{\alpha_{i-1}},x,y,a_{\alpha_{i+2}},\ldots,a_{\alpha_{n-1}}\right)$
defines the order relation on $\left(\beta,\gamma\right)$. By indiscernibility
again, for any $\varepsilon>0$ there is a formula $\psi_{<}\left(x,y\right)$
over $\set{a_{\alpha}}{\alpha\in\left[0,\varepsilon\right]\cup\left[1-\varepsilon,1\right]}$
which defines the order relation on $\left(\varepsilon,1-\varepsilon\right)$.
Partition $\left(\varepsilon,1-\varepsilon\right)$ into finitely
many intervals $J_{k}$, each of length $\leq\varepsilon$, which
are then definable by $\psi_{k}$. Then $\mu\left(\neg\bigvee\psi_{k}\right)\leq2\varepsilon$,
and $\mu\left(\psi_{k}\right)\leq3\varepsilon$ for each $k$ (if
$\psi_{k}\left(a_{\alpha}\right)$ holds, then if $\alpha\in\left(\varepsilon,1-\varepsilon\right)$
it must be in the interval defined by $\psi_{k}$). Since $p$ is
complete, it follows by the definition of $\mu$ on closed sets that
$\mu\left(\left\{ p\right\} \right)=0$. 
\end{proof}
It follows that for any closed set $F$ with $\mu\left(F\right)>0$,
we can find infinitely many types $p_{i}\in\supp\left(\mu\right)\cap F$
(let $\varepsilon=\mu\left(F\right)$. Find $p\in\supp\left(\mu\right)\cap F$,
and a clopen set containing $p$ of measure $\leq\varepsilon/2$.
Removing this set we still have a closed set with measure at least
$\varepsilon/2$, so we can go on). 

Thus, since $\mu\left(\Sigma\right)>0$, by Claim \ref{claim:every type in the support is a limit}
we can find infinitely many $\alpha$'s in $\left(0,1\right)$ such
that $\lim^{+}\left(\alpha\right)$ or $\lim^{-}\left(\alpha\right)$
satisfy $\Sigma$. Without loss they are all of the form $\lim^{-}\left(\alpha\right)$.
Enumerate them as $\sequence{\alpha_{i}}{i<\omega}$. By definition
of $\Sigma$, for each $i<\omega$ we can find $b_{\alpha_{i}}$ such
that $\sequence{a_{\alpha}}{\alpha<\alpha_{i}}+b_{\alpha_{i}}+\sequence{a_{\alpha}}{\alpha\geq\alpha_{i}}$
is indiscernible and $\varphi\left(b_{\alpha_{i}},d\right)$ holds.
To see this, it is enough to show that $\lim^{-}\left(\alpha\right)$
is consistent with $\varphi\left(x,d\right)$. If not, then there
is a formula $\theta\left(x\right)\in\lim^{-}\left(\alpha\right)$
such that $\theta\left(x\right)\vdash\neg\varphi\left(x,d\right)$,
meaning that $\neg\theta\in\Sigma$, but then $\neg\theta\in\lim^{-}\left(\alpha\right)$.

Now find a dense indiscernible sequence $\sequence{\left(a_{i}'b_{i}'\right)}{i\in\Qq}$
which is indiscernible over $d$ and realizes the EM-type of $\sequence{\left(a_{\alpha_{i}}b_{\alpha_{i}}\right)}{i<\omega}$
over $d$. Let $I'=\sequence{a_{i}'}{i\in\Qq}$, then, for every $i\in\Qq$,
$I'_{<i}+I'_{>i}$ is indiscernible over $d$, $I'_{<i}+b_{i}'+I'_{>i}$
is indiscernible, $\varphi\left(b_{i},d\right)$ holds, and $\varphi\left(x,d\right)$
divides over $I'_{<\alpha}+I'_{>\alpha}$ --- this is a contradiction
to the assumption that the condition on the right hand side holds. 

The statement of the theorem ends with a ``Moreover'' clause which
follows as we now explain. Suppose that $\varphi\left(x_{0},\ldots,x_{n-1},y\right)$
is over $I$, $\varphi\left(x_{0},\ldots,x_{n-1},d\right)$ divides
over $I$ and that $a_{0},\ldots,a_{n-1}$ is an increasing tuple
from $I_{1}+b+I_{2}$ and that $\varphi\left(a_{0},\ldots,a_{n-1},d\right)$
holds. Suppose that $a_{i}=b$ (if $b$ does not belong to $a_{0},\ldots,a_{n-1}$
then we already get a contradiction since dividing formulas are not
satisfiable in the base). Consider the formula $\psi\left(x\right)=\varphi\left(a_{0},\ldots,a_{i-1},x,a_{i+1},\ldots,a_{n-1},d\right)$
(which has some more parameters from $I$). Since $\varphi\left(x_{0},\ldots,x_{n-1},d\right)$
divides over $I$, it follows that $\psi\left(x\right)$ divides over
$I'$ where $I'=I_{1}'+I_{2}'$ and $I_{1}'$ is an end-segment of
$I_{1}$ and $I_{2}'$ an initial segment of $I_{2}$ disjoint from
the parameters of $\psi$ (with the same sequence of conjugates of
$d$ augmented with the parameters of $\psi$ witnessing this). By
the previous result, we get a contradiction. 
\end{proof}
\begin{prop}
The theory $T$ is distal iff it is NTP$_{2}$-distal and NIP. 
\end{prop}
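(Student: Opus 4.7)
The plan is to combine two ingredients from the paper: the strengthened form of Fact \ref{fact:Distal iff all averages are smooth} noted in the paragraph just before Definition \ref{def:NTP2 distal} (namely, that $T$ is distal iff it is NIP and $\Av_I|_I$ is smooth for every dense indiscernible sequence $I = \sequence{a_i}{i \in [0,1]}$), together with Remark \ref{rem:in NIP smooth =00003D NTP2 smooth} that under NIP, smoothness and NTP$_2$-smoothness coincide. Reading ``NIP + $\Av_I|_I$ is smooth'' through this equivalence transforms it into ``NIP + $\Av_I|_I$ is NTP$_2$-smooth'', which is exactly ``NIP + NTP$_2$-distal''.

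For the $(\Rightarrow)$ direction, assume $T$ is distal. Then $T$ is NIP (as recorded right after Definition \ref{def:distal}) and in particular NTP$_2$, and $\Av_I|_I$ is smooth for every such $I$ by the strengthened Fact \ref{fact:Distal iff all averages are smooth}. The smooth-implies-NTP$_2$-smooth half of Remark \ref{rem:in NIP smooth =00003D NTP2 smooth} then gives that $\Av_I|_I$ is NTP$_2$-smooth, so $T$ is NTP$_2$-distal. A more direct route, avoiding measure language, goes through Theorem \ref{thm:NTP2 distal in terms of sequence}: given $I = I_1 + I_2$, $b$, and $\varphi(x,d)$ as in that theorem, apply distality with $A = \{d\}$ to the indiscernible sequence $I_1 + b + I_2$ to conclude that it is indiscernible over $d$; then $\tp(b/Id) = \tp(a/Id)$ for any $a \in I$, so if $\varphi(b,d)$ held then any $I$-indiscernible witness $\sequence{d_j}{j < \omega}$ of dividing would satisfy $\varphi(a, d_j)$ for all $j$, contradicting $k$-inconsistency.

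For the $(\Leftarrow)$ direction, assume $T$ is NIP and NTP$_2$-distal. Then $\Av_I|_I$ is NTP$_2$-smooth for every such $I$ by the definition of NTP$_2$-distal, and by the NTP$_2$-smooth-implies-smooth half of Remark \ref{rem:in NIP smooth =00003D NTP2 smooth} it is smooth. The strengthened Fact \ref{fact:Distal iff all averages are smooth} then concludes that $T$ is distal.

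The one technical point, which I expect to be the main obstacle to a clean write-up, is that Remark \ref{rem:in NIP smooth =00003D NTP2 smooth} is phrased for measures over a model $M \models T$, while here both directions apply it to $\Av_I|_I$, a measure over the parameter set $I$, which is typically not a model. This is bookkeeping rather than a substantive hurdle: using Fact \ref{fact:extending of measures} one first extends $\Av_I|_I$ to a Keisler measure over some model $M \supseteq I$, and since both smoothness over $I$ and NTP$_2$-smoothness over $I$ are statements about global extensions together with dividing over $I$, both properties transfer between $\Av_I|_I$ and its extension to $M$; the arguments of Remark \ref{rem:in NIP smooth =00003D NTP2 smooth} then apply verbatim.
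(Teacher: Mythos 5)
Your measure-free argument for the left-to-right direction is essentially the paper's own alternative proof of that direction and is fine, up to one small slip: indiscernibility of $I_{1}+b+I_{2}$ over $d$ gives $\tp\left(b/d\right)=\tp\left(a/d\right)$ for every $a\in I$, not $\tp\left(b/Id\right)=\tp\left(a/Id\right)$ (the latter is false, e.g.\ because of $x=a$); but that is all you need, since then $\varphi\left(a,d\right)$ holds for some $a\in I$, and $d_{j}\equiv_{I}d$ yields $\varphi\left(a,d_{j}\right)$ for all $j$, contradicting $k$-inconsistency.

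The genuine gap is in your final ``bookkeeping'' paragraph, and it matters for the right-to-left direction. Of the transfers you invoke between $\Av_{I}|_{I}$ and an extension $\nu\in\measureSpace xM$ with $I\subseteq M$, only the upward ones are valid: NTP$_{2}$-smoothness (and smoothness) of $\Av_{I}|_{I}$ pass to $\nu$, because every extension of $\nu$ extends $\Av_{I}|_{I}$ and a formula dividing over $M$ divides over $I$. The downward transfers are false: in an NIP theory any measure over $I$, smooth or not, extends to a measure which is smooth (hence NTP$_{2}$-smooth) over some model, so smoothness or NTP$_{2}$-smoothness of one particular extension over $M$ tells you nothing about $\Av_{I}|_{I}$ over $I$. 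But the downward step is exactly what your right-to-left argument needs, since you want ``$\Av_{I}|_{I}$ NTP$_{2}$-smooth over $I$ implies $\Av_{I}|_{I}$ smooth over $I$'' in order to quote the strengthened form of Fact \ref{fact:Distal iff all averages are smooth}; moreover, the relevant half of Remark \ref{rem:in NIP smooth =00003D NTP2 smooth} is proved using facts stated over models (non-forking equals invariance, and the cited Lemma 7.17), and over an arbitrary set one only gets Lascar-invariance, so ``apply verbatim'' is not automatic. The paper sidesteps all of this: to prove distality it fixes a model $M\supseteq I$, shows that $\Av_{I}|_{M}$ is NTP$_{2}$-smooth over $M$ (precisely the valid upward transfer), applies Remark \ref{rem:in NIP smooth =00003D NTP2 smooth} over the model $M$ to conclude that $\Av_{I}|_{M}$ is smooth, and then uses Fact \ref{fact:Distal iff all averages are smooth} in its stated, over-models form; no strengthened fact and no downward transfer are needed there. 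You should rewrite your right-to-left direction along these lines (your left-to-right direction is already covered by your direct argument, or by the paper's observation that the unique global extension of $\Av_{I}|_{I}$ is finitely satisfiable in $I$).
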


\begin{proof}
Suppose that $T$ is NTP$_{2}$-distal and NIP. By Fact \ref{fact:Distal iff all averages are smooth},
we have to show that for any model $M$ and $I\subseteq M$ indiscernible
indexed by $\left[0,1\right]$, $\Av_{I}|_{M}$ is smooth. By Proposition
\ref{prop:in NIP smooth =00003D NTP2 smooth}, it is enough to show
that $\Av_{I}|_{M}$ is NTP$_{2}$-smooth. Suppose that $\varphi\left(x,c\right)$
divides over $M$ and that $\mu\left(\varphi\left(x,c\right)\right)>0$
for some $\mu$ extending $\Av_{I}|_{M}$. In particular $\mu$ extends
$\Av_{I}|_{I}$ and $\varphi\left(x,c\right)$ divides over $I$,
so the latter is not NTP$_{2}$-smooth --- a contradiction to the
definition of NTP$_{2}$-distality. 

On the other hand, if $T$ is distal, then it is NIP (see after Definition
\ref{def:distal}). If the reader believes that Fact \ref{fact:Distal iff all averages are smooth}
is also true over $I$ (which is not stated but follows from the proof
of this fact), then there is a unique global extension of $\Av_{I}|_{I}$,
namely $\Av_{I}|_{\C}$, which is also finitely satisfiable in $I$
(if $\Av_{I}\left(\varphi\left(x,c\right)\right)>0$ then $\varphi\left(I,c\right)\neq\emptyset$)
so does not divide over $I$ (see Definition \ref{def:NTP2 smooth}).
In particular every extension of $\Av_{I}|_{I}$ does not divide over
$I$ so it is NTP$_{2}$-smooth. 

For the skeptic reader, we also give an alternative proof using Theorem
\ref{thm:NTP2 distal in terms of sequence}: if $I=I_{1}+I_{2}$ is
an indiscernible sequence with $I_{1}$ endless and $I_{2}$ with
no first element and $I$ is $d$-indiscernible, $I_{1}+b+I_{2}$
is indiscernible then by distality $I_{1}+b+I_{2}$ is indiscernible
over $d$. Hence, if $\varphi\left(x,d\right)$ divides over $I$
then $\neg\varphi\left(b,d\right)$ holds (because if $\varphi\left(b,d\right)$
holds then $\varphi\left(a,d\right)$ holds for all $a\in I$ and
in particular $\varphi\left(x,d\right)$ is satisfiable in $I$ so
cannot divide over $I$). 
\end{proof}
\begin{question}
Is being NTP$_{2}$-distal preserved under forgetting parameters?
In other words, is it true that if $A\subseteq\C$ and $T_{A}=\Th\left(\C_{A}\right)$
is NTP$_{2}$-distal then so is $T$?
\end{question}

\begin{rem}
Being NTP$_{2}$-distal is preserved under naming parameters. To see
this, first note that being NTP$_{2}$ is stable under naming and
forgetting parameters. Suppose that $T$ is NTP$_{2}$-distal, and
fix some $A$. By Theorem \ref{thm:NTP2 distal in terms of sequence},
it is enough to show that if $I_{1}+b+I_{2}$ is $A$-indiscernible,
$I=I_{1}+I_{2}$ is $Ad$-indiscernible and $\varphi\left(x,d\right)$
divides over $AI$ (where $\varphi$ is over $A$), then $\neg\varphi\left(b,d\right)$
holds (where $I_{1}$ has no last element and $I_{2}$ has no first
element). Now incorporate $A$ (or even just the parameters from $A$
appearing in $\varphi$) into $I_{1}+b+I_{2}$ by concatenating these
elements in the end of every tuple from the sequence and do the same
with $d$. Then we are in the situation of Theorem \ref{thm:NTP2 distal in terms of sequence},
but over $\emptyset$, so we are done.
\end{rem}

\begin{example}
\label{exa:The-ordered-random}The ordered random graph is NTP$_{2}$-distal.
The ordered random graph is the model companion of the theory of ordered
graphs in the language $\left\{ R,<\right\} $ where the order and
the graph are independent. The restriction to the order part is just
DLO and hence distal, and the restriction to the graph $R$ is the
random graph. Note the following easy facts:
\begin{itemize}
\item If $p_{<}\left(x\right)$ and $p_{R}\left(x\right)$ are (partial)
types over any set $A$ in $\left\{ <\right\} $, $\left\{ R\right\} $
respectively, whose restrictions to $\left\{ =\right\} $ is a complete
type: $p_{<}\restriction\left\{ =\right\} =p_{R}\restriction\left\{ =\right\} \in S_{x}^{=}\left(A\right)$,
then their union $p$ is a consistent type over $A$ (after taking
completions over $A$, this union determines a unique ordered graph
with universe $Ax$, which can be embedded into $\C$ in a way that
fixes $A$) and if $p_{<},p_{R}$ are complete, so is $p$. Here $x$
is any (finite) tuple of variables. 
\item It follows that if $p\left(x\right)$ is a complete type over $A$
which divides over some $B$ then either its restriction to $\left\{ <\right\} $
or its restriction to $\left\{ R\right\} $ divides over $B$. 
\item Call a complete type $p$ strongly non-algebraic if every realization
of it is disjoint to its domain. Since strongly non-algebraic types
in the random graph do not divide (over $\emptyset$ so over every
set), it follows that in such a case, $p_{<}=p\restriction\left\{ <\right\} $
must divide. 
\item Suppose we are in the situation of Theorem \ref{thm:NTP2 distal in terms of sequence},
i.e., with $A=\emptyset$, $I=I_{1}+I_{2}$, $I_{1}+b+I_{2}$ is indiscernible
over $A$, $I$ is $Ad$-indiscernible, $\varphi\left(x,y\right)$
is over $A$, $\varphi\left(x,d\right)$ divides over $AI$, but towards
a contradiction, assume that $\varphi\left(b,d\right)$ holds. Now
we enlarge $A$, shrink every tuple from $I$ and change $\varphi$
accordingly to get that  the intersection of any two tuples is empty
($A$ will be the elements common to all/any two tuples in $I$ or
even $I_{1}+b+I_{2}$). Note that everything is preserved by doing
that. Then: if $p\left(x\right)=\tp\left(b/dA\right)$ is strongly
non-algebraic then $p_{<}$ must divide over $I$, but as DLO is distal
$I_{1}+b+I_{2}$ is indiscernible over $Ad$ in $\left\{ <\right\} $
and hence $p_{<}\left(a\right)$ holds for all $a\in I$ and in particular
$p_{<}$ is finitely satisfiable in $I$ so cannot divide over it. 
\item It follows that in such a situation $p\left(x\right)$ is not strongly
non-algebraic, so one of the points $b'$ in the tuple $b$ is equal
to some $d'\in d$ (it is impossible that $b'\in A$). As $I_{1}+b+I_{2}$
is $Ad$-indiscernible in $\left\{ <\right\} $, it follows that in
the coordinate of $b'$, $I$ must be constant, contradiction to the
assumption that the intersection of any two tuples from $I$ is empty.
\end{itemize}
\end{example}

\begin{example}
\label{exa:The-random-tournament}The random tournament is NTP$_{2}$-distal.
This theory $T$ is the model companion of the theory of tournaments:
it is a universal theory in the language $\left\{ R\right\} $ where
$R$ is a binary relation whose only axiom is $\forall xyR\left(x,y\right)\leftrightarrow\neg R\left(y,x\right)$.
The theory $T$ is supersimple of $U$-rank 1. In other words, if
$\varphi\left(x,a\right)$ forks over $A$ where $x$ is a singleton
then $\varphi$ is algebraic (i.e., $\varphi\vdash\bigvee_{i<k}x=c_{i}$).
In fact, any union of strongly non-algebraic complete types $p_{i}\left(x\right)$
extending a complete type $p\left(x\right)$ over $A$ with $x$ any
finite tuple over $A_{i}$ for $i<\omega$ such that $A_{i}\cap A_{j}=A$
for all $i<j$ is consistent (consider the tournament on $\bigcup_{i}A_{i}x$
given by all the $p_{i}$'s and embed it into $\C$ in a way that
fixes $\bigcup_{i}A_{i}$). 

As in Example \ref{exa:The-ordered-random}, to show that $T$ is
NTP$_{2}$-distal, we assume that $I=I_{1}+I_{2}$ is $Ad$-indiscernible,
$I_{1}+b+I_{2}$ is $A$-indiscernible, the intersection of any two
distinct tuples from $I$ is empty and $\varphi\left(x,d\right)$
divides over $AI$. Suppose that $\varphi\left(b,d\right)$ holds.
It follows from the previous paragraph that $p\left(x\right)=\tp\left(b/Ad\right)$
is not strongly non-algebraic. Since $b\cap A=\emptyset$, there must
be some $b'\in b$ and $d'\in d$ such that $b'=d'$. Suppose that
$a_{1}\in I_{1}$ and $a_{1}'\in a_{1}$ is in the same coordinate
as $b'$, and $R\left(a_{1}',b'\right)$ holds, then it must be that
$R\left(b',a_{2}'\right)$ holds for any $a_{2}\in I_{2}$. But as
$I$ is indiscernible over $d$, it follows that $R\left(a_{2}',b'\right)$
holds as well --- contradiction to the definition of a tournament.
Otherwise, $R\left(b',a_{1}'\right)$ holds and apply the symmetric
argument. 
\end{example}

Example \ref{exa:The-random-tournament} shows that as opposed to
the NIP case, where no distal stable theory exists, it is true that
there are NTP$_{2}$-distal simple theories. This raises questions
about this definition, which leaves room for future investigations. 
\begin{problem}
Find more examples of NTP$_{2}$ -distal theories. Some natural candidates
include the theory of bounded PRC fields, see \cite{MR3564381}. 
\end{problem}

\section{\label{sec:finite alternation}On $\omega$-resilience and a finite
alternation theorem}

Recall that $T$ is called \emph{resilient} if whenever $\sequence{a_{i}}{i\in\Zz}$
is indiscernible, and $\varphi\left(x,a_{0}\right)$ divides over
$a_{\neq0}$, then $\sequence{\varphi\left(x,a_{i}\right)}{i\in\Zz}$
is inconsistent. This notion was introduced in \cite{MR3226015}.
All NIP and simple theories are resilient, and all resilient theories
are NTP$_{2}$. It is conjectured that NTP$_{2}$ theories are all
resilient. 
\begin{defn}
\label{def:omega-resilient}Say that $T$ is \emph{$\omega$-resilient}
if whenever $\sequence{a_{i}}{i<\omega}$ and $\sequence{b_{i}}{i<\omega}$
are such that:
\begin{enumerate}
\item Both $\sequence{a_{i}}{i<\omega}$ and $\sequence{b_{i}}{i<\omega}$
are indiscernible.
\item For every $k<\omega$, $\sequence{a_{i}}{i\leq k}+\left\langle b_{k}\right\rangle +\sequence{a_{i}}{k<i<\omega}$
is indiscernible (in particular, $b_{i}$ and $a_{j}$ are all tuples
of the same length).
\end{enumerate}
Then for every formula $\varphi\left(x,y\right)$, if $\set{\varphi\left(x,a_{i}\right)}{i<\omega}$
is consistent, then so is $\set{\varphi\left(x,b_{i}\right)}{i<\omega}$. 
\end{defn}

\begin{rem}
\label{rem:Get an array}
\begin{enumerate}
\item In Definition \ref{def:omega-resilient}, using Ramsey we could replace
(1) by asking that $\sequence{a_{i}b_{i}}{i<\omega}$ is indiscernible.
\item If $T$ is resilient then $T$ is $\omega$-resilient. Why? Suppose
that $\sequence{a_{i}b_{i}}{i<\omega}$ is indiscernible and as in
the definition and $\set{\varphi\left(x,a_{i}\right)}{i<\omega}$
is consistent, but $\set{\varphi\left(x,b_{i}\right)}{i<\omega}$
is inconsistent. Then the latter is also $k$-inconsistent for some
$k<\omega$. Then for any $N<\omega$, $\sequence{b_{N+i}}{i<N}$
is a sequence of tuples realizing $\tp\left(a_{N}/\left(a_{<N}\cup a_{>N+N}\right)\right)$
such that $\set{\varphi\left(x,b_{N+i}\right)}{i<N}$ is $k$-inconsistent.
Hence by compactness we find a sequence contradicting resilience. 
\item \cite[Proposition 4.5]{MR3226015} says that if $\sequence{a_{i}}{i<\omega}$
and $\sequence{b_{i}}{i<\omega}$ are indiscernible sequences in the
sort of $y$ such that for every $\varphi\left(x,y\right)$, if $\set{\varphi\left(x,a_{i}\right)}{i<\omega}$
is consistent then so is $\set{\varphi\left(x,b_{i}\right)}{i<\omega}$
then there is an array (with mutually indiscernible rows) $\sequence{c_{i,j}}{i,j<\omega}$
such that every row $\bar{c}_{i}\equiv\sequence{b_{i}}{i<\omega}$
and for every $\eta:\omega\to\omega$, $\sequence{c_{i,\eta\left(i\right)}}{i<\omega}\equiv\sequence{a_{i}}{i<\omega}$.
Thus, if $T$ is $\omega$-resilient and $\sequence{a_{i}b_{i}}{i<\omega}$
are as in the definition then we can find such an array. 
\end{enumerate}
\end{rem}

\begin{question}
Does $\omega$-resilience imply NTP$_{2}$? Does NTP$_{2}$ imply
$\omega$-resilience?
\end{question}

The following is the main theorem for this section.
\begin{thm}
\label{thm:finite alternation in NTP2}Suppose that $T$ is $\omega$-resilient
and NTP$_{2}$. Suppose that $\sequence{a_{i}}{i<\omega}$ is an indiscernible
sequence and that $\varphi\left(x,b\right)$ divides over $\sequence{a_{2i}}{i<\omega}$.
Then for all but finitely many $i$'s, $\neg\varphi\left(a_{i},b\right)$
holds. 
\end{thm}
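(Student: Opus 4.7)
The plan is to split the analysis by the parity of $i$. Handling the even indices is essentially free from the hypotheses: if $\varphi(a,b)$ holds for some $a \in I$, take an $I$-indiscernible witness $\sequence{b_n}{n<\omega}$ to the dividing, with $b_0 = b$ and $\set{\varphi(x,b_n)}{n<\omega}$ $k$-inconsistent. Since $a \in I$ and $b_n \equiv_I b$, we have $\varphi(a,b_n) \Leftrightarrow \varphi(a,b)$, so $\varphi(a,b_n)$ holds for every $n$ and $a$ itself realizes $\set{\varphi(x,b_n)}{n<\omega}$, contradicting $k$-inconsistency. This already yields $\neg\varphi(a_{2i},b)$ for \emph{every} $i$, without using either $\omega$-resilience or NTP$_2$.

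For the odd indices, suppose for contradiction that $\varphi(a_{2i+1},b)$ holds for infinitely many $i$. Since the pair sequence $\sequence{(a_{2i},a_{2i+1})}{i<\omega}$ is $\emptyset$-indiscernible, a Ramsey extraction lets us pass to a $b$-indiscernible refinement of pairs on which $\varphi(\cdot,b)$ holds uniformly on the second coordinate; after relabelling, assume $\sequence{(a_{2i},a_{2i+1})}{i<\omega}$ is $b$-indiscernible, $\varphi(a_{2i+1},b)$ holds for every $i$, and $\neg\varphi(a_{2i},b)$ still holds for every $i$. Now apply the array extraction of Remark~\ref{rem:Get an array} with main sequence $\alpha_i := a_{2i+1}$ and inserted sequence $\beta_i := a_{2i+2}$: the combined sequence $a_1,a_3,\dots,a_{2k+1},a_{2k+2},a_{2k+3},a_{2k+5},\dots$ is an order-preserving subsequence of the indiscernible $\sequence{a_i}{i<\omega}$, so the hypotheses of Definition~\ref{def:omega-resilient} are met and one obtains an array $(d_{ij})_{i,j<\omega}$ with mutually indiscernible rows, each row $\equiv_\emptyset \sequence{a_{2j+2}}{j}$ and, for every $\eta:\omega\to\omega$, path $\sequence{d_{i,\eta(i)}}{i} \equiv_\emptyset \sequence{a_{2i+1}}{i}$.

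The goal is now to combine this array with the $I$-indiscernible dividing witness $\sequence{b_n}{n<\omega}$ to derive a contradiction with NTP$_2$. The basic idea is that the path-type equivalence transports $b$, via an automorphism sending $\sequence{a_{2i+1}}{i} \mapsto \sequence{d_{i,\eta(i)}}{i}$, to a realizer $b_\eta$ of $\set{\varphi(d_{i,\eta(i)},x)}{i<\omega}$; simultaneously, $I$-automorphisms $\sigma_n$ sending $b\mapsto b_n$ produce shifted copies $J^{(n)}=\sigma_n(J)$ with $\varphi(a,b_n)$ for all $a \in J^{(n)}$, and after extracting an $I$-indiscernible refinement of $\sequence{(J^{(n)},b_n)}{n<\omega}$ one gets a two-dimensional configuration of parameters $\sequence{b_n^{(i)}}{n,i<\omega}$ for which the rows (indexed by $n$) are $k$-inconsistent by dividing and the paths are consistent by transporting the $\varphi$-satisfaction on $J$ through the array-paths. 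Such a configuration is a TP$_2$-witness for $\varphi$, contradicting NTP$_2$. The main obstacle is the bookkeeping in this last step: one must choreograph the $I$-automorphisms $\sigma_n$ together with the mutually indiscernible row-realizers coming from the $\omega$-resilience array so that row $k$-inconsistency (inherited from dividing) and path consistency (inherited from $\varphi(a_{2i+1},b)$ via the path-type equivalence) hold in the \emph{same} array — both $\omega$-resilience (to get the array structure) and NTP$_2$ (to close with the TP$_2$-contradiction) are used essentially here.
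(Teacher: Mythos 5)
Your handling of the even indices is correct and is exactly the paper's parenthetical remark (a formula that divides over $I$ cannot be satisfied by a tuple from $I$), and your reduction of the odd case to ``$\varphi\left(a_{2i+1},b\right)$ holds for all $i$, with $\sequence{a_{2i}a_{2i+1}}{i<\omega}$ indiscernible over $b$'' also matches the paper. But from that point on the proposal stops exactly where the real work begins: you yourself flag the assembly of ``row inconsistency from dividing'' and ``path consistency from the odd elements'' into a single TP$_{2}$ array as the main obstacle, and that assembly \emph{is} the proof; nothing in the sketch shows how to carry it out, and the setup you chose does not support it. Two concrete problems. First, working with single elements $a_{2i+1},a_{2i+2}$ cannot suffice: the final contradiction has to go through the TP$_{2}$ criterion (Fact \ref{fact:Criterion for TP2} via Corollary \ref{cor:better criterion for TP2}), which needs each instance $\varphi\left(x,b_{k}'\right)$ of the inconsistent family to be satisfied by an \emph{infinite} set of elements of the original indiscernible sequence. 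This is why the paper re-indexes the sequence by $\omega\times\omega$ and works with whole columns $\bar{a}_{k},\bar{c}_{k}$ as tuples. Second, before $\omega$-resilience is invoked one must weld the dividing witnesses to the odd part: taking $\bar{b}=\sequence{b_{k}}{k<\omega}$ witnessing $r$-dividing of $\varphi\left(x,b\right)$ over the whole even array $\bar{a}$, the paper chooses conjugates $\bar{c}^{k}$ with $b_{k}\bar{c}^{k}\equiv_{\bar{a}}b\bar{c}_{k}$, so that each $b_{k}$ carries its own copy of an odd column realizing $\varphi\left(x,b_{k}\right)$ while the insertion property into the even columns survives. Your sketch has no analogue of this conjugation step, and it is the only place where the dividing witnesses and the odd elements get tied together.

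Moreover, your application of Remark \ref{rem:Get an array} has the roles reversed relative to what is needed: the paper takes the even columns as the main sequence (so the array \emph{paths} are copies of $\sequence{\bar{a}_{k}}{k<\omega}$) and the moved odd columns as the inserted tuples (so the array \emph{rows} are copies of $\sequence{\bar{c}^{k}}{k<\omega}$, along which parameters $b_{n,m}$ with $r$-inconsistent rows can be attached, since $r$-inconsistency is inherited from $\sequence{b_{k}}{k<\omega}$). NTP$_{2}$ is then used \emph{twice}: once to force the path $\eta\equiv0$ of the $b_{n,m}$'s to be $l$-inconsistent, and once more, after transporting that path back onto $\sequence{\bar{a}_{k}}{k<\omega}$ to get $b_{k}'$ with $\varphi\left(a_{n,k},b_{k}'\right)$ for all $n$, via Corollary \ref{cor:better criterion for TP2}. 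In your orientation (rows $\equiv$ evens, paths $\equiv$ odds) neither feature of a TP$_{2}$ parameter array is available: the $k$-inconsistency lives along the dividing witness $\sequence{b_{n}}{n<\omega}$, not along a copy of the even sequence, and producing one parameter $b_{\eta}$ per path is not the same as exhibiting consistent paths through an array of parameters. So the proposal names the right ingredients ($\omega$-resilience for an array, NTP$_{2}$ to close) but has a genuine gap at the decisive step, and the specific configuration proposed would not close it without being reorganized along the lines above.
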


Note that this is true when $T$ is simple: if not, by Ramsey (and
pigeonhole), we may assume that $\sequence{a_{2i}a_{2i+1}}{i<\omega}$
is indiscernible over $b$ and $\varphi\left(a_{2i+1},b\right)$ holds
for all $i<\omega$ (note that $\varphi\left(a_{2i},b\right)$ never
holds by assumption). We now extend the sequence $\sequence{a_{i}}{i<\omega}$
to have order type $\omega+\omega$, and let $I_{1}=\sequence{a_{2i}}{i<\omega}$
and $I_{2}=\sequence{a_{2i+1}}{\omega\leq i<\omega+\omega}$. Then
$\varphi\left(x,b\right)$ divides over $I_{1}$ so by symmetry $b\nind_{I_{1}}a_{2i+1}$
for every $\omega\leq i$ as witnessed by some formula $\psi\left(x,a_{2i+1}\right)\in\tp\left(b/a_{2i+1}I_{1}\right)$
over $I_{1}$ (by indiscernibility it is the same formula). However
$I_{2}$ is a reverse Morley sequence over $I_{1}$ (in the sense
that $a_{2i+1}\ind_{I_{1}}a_{>2i+1}$ by finite satisfiability) so
by Kim's Lemma (see e.g., \cite[Proposition 2.1]{KimForking}), $\set{\psi\left(x,a_{2i+1}\right)}{\omega\leq i<\omega}$
is inconsistent --- this is a contradiction because $b$ realizes
it by indiscernibility. 

Note also that the conclusion of Theorem \ref{thm:finite alternation in NTP2}
is true if $T$ is NIP (this is trivial) and when $T$ is NTP$_{2}$-distal.
The latter follows from Theorem \ref{thm:NTP2 distal in terms of sequence}
(we leave the details to the reader; the argument starts the same
way as in the simple case, taking a sequence of order type $\Qq+1+\Qq$).

Before the proof let us recall a simple criterion for having TP$_{2}$.
\begin{fact}
\label{fact:Criterion for TP2}\cite[Lemma 2.24]{22-Levi2016}Suppose
that $A$ is some infinite set in $\C$ and $\varphi\left(x,y\right)$
is a formula such that for some $k<\omega$ , for every sequence $\sequence{A_{i}}{i<\omega}$
of pairwise disjoint subsets of $A$, there are $\sequence{b_{i}}{i<\omega}$
such that $A_{i}\subseteq\varphi\left(\C,b_{i}\right)$ and $\set{\varphi\left(x,b_{i}\right)}{i<\omega}$
is $k$-inconsistent. Then $T$ has TP$_{2}$. 
\end{fact}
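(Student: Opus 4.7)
The plan is to reduce the statement to a finitary construction via compactness and then build the finite approximations by indexing paths inside $A$. It suffices, for each $N<\omega$, to produce parameters $\sequence{b_{i,j}}{i,j<N}$ in $\C$ so that every row $\set{\varphi(x,b_{i,j})}{j<N}$ is $k$-inconsistent and, for every $\eta\colon N\to N$, the conjunction $\bigwedge_{i<N}\varphi(x,b_{i,\eta(i)})$ is consistent. Once we have such a pattern for each $N$ (with the same $k$), a standard compactness argument in an expansion of $L$ with fresh constants for the $b_{i,j}$ produces a single $\omega\times\omega$ array witnessing TP$_{2}$ for $\varphi$.

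For the $N$-th pattern, the idea is to feed the hypothesis one row at a time, using the $N^N$ functions $\eta\colon N\to N$ as a bookkeeping device inside $A$. Since $A$ is infinite, choose $N^N$ pairwise distinct elements $a_{\eta}\in A$, one for each $\eta\colon N\to N$. For each row $i<N$, set $A_{i,j}=\set{a_{\eta}}{\eta(i)=j}$ for $j<N$; these $N$ sets are pairwise disjoint subsets of $A$, so padding the sequence with $\emptyset$'s (or with further distinct singletons from $A$) gives an $\omega$-sequence of pairwise disjoint subsets of $A$. Applying the hypothesis to this sequence yields parameters $b_{i,0},b_{i,1},\ldots$ with $A_{i,j}\subseteq\varphi(\C,b_{i,j})$ for all $j<\omega$ and with $\set{\varphi(x,b_{i,j})}{j<\omega}$ being $k$-inconsistent; in particular its truncation to $j<N$ is $k$-inconsistent.

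The consistency of paths then comes for free: for any $\eta\colon N\to N$ and any $i<N$, the element $a_{\eta}$ lies in $A_{i,\eta(i)}\subseteq\varphi(\C,b_{i,\eta(i)})$, so $a_{\eta}$ realizes $\bigwedge_{i<N}\varphi(x,b_{i,\eta(i)})$. This completes the $N$-th pattern and, via compactness, the fact. The only genuine content is the combinatorial gadget of the previous paragraph, namely packaging each of the $N^N$ prospective paths as a distinct element of $A$ so that the row-wise partitions induced by evaluation at coordinate $i$ are automatically pairwise disjoint and every path has a common realizer; everything else is compactness and bookkeeping.
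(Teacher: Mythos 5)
Your argument is correct. Note that the paper itself gives no proof of this fact \textemdash{} it is quoted from \cite[Lemma 2.24]{22-Levi2016} \textemdash{} so there is nothing in the text to compare against; what you have written is a complete, self-contained and elementary verification. The two pillars both check out: (i) the compactness reduction to finite $N\times N$ patterns with a \emph{fixed} $k$ is standard and stated correctly (row $k$-inconsistency and consistency of finite partial paths are first-order conditions on the new constants, and any finite fragment only mentions an $N\times N$ corner); and (ii) the combinatorial gadget works because the elements $a_{\eta}$, $\eta\colon N\to N$, are chosen pairwise distinct, so for each fixed row $i$ the sets $A_{i,j}=\set{a_{\eta}}{\eta\left(i\right)=j}$ really are pairwise disjoint, the hypothesis applied row by row gives the $k$-inconsistent rows, and each path $\eta$ is realized by $a_{\eta}$ itself. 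The padding of the finite family to an $\omega$-sequence (by empty sets or fresh singletons) is harmless, since only the first $N$ parameters of each row are used and $k$-inconsistency of the full row passes to the truncation.
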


\begin{cor}
\label{cor:better criterion for TP2}Suppose that $\sequence{a_{n,m}}{n,m<\omega}$
is an indiscernible sequence, ordered lexicographically and that $\varphi\left(x,y\right)$
is a formula such that for some $k<\omega$, there is a sequence $\sequence{b_{m}}{m<\omega}$
such that for every $n,m<\omega$, $\varphi\left(a_{n,m},b_{m}\right)$
holds and $\set{\varphi\left(x,b_{m}\right)}{m<\omega}$ is $k$-inconsistent.
Then $T$ has TP$_{2}$. 
\end{cor}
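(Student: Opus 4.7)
The plan is to deduce $\TPT$ from Fact \ref{fact:Criterion for TP2}, taking the infinite set $A$ there to be a single column of the given array and, for each family of pairwise disjoint subsets of $A$, producing witnessing parameters by applying a single automorphism of $\C$ to the given $(b_{m})_{m<\omega}$.

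First I would set $A:=\set{a_{n,0}}{n<\omega}$. Given pairwise disjoint $A_{i}=\set{a_{n,0}}{n\in S_{i}}$ with $S_{i}\subseteq\omega$, I would enumerate $S:=\bigcup_{i<\omega}S_{i}$ in increasing order as $s_{1}<s_{2}<\ldots$ and, writing $i(\ell)$ for the unique index with $s_{\ell}\in S_{i(\ell)}$, observe that the two tuples $(a_{\ell,i(\ell)})_{\ell}$ and $(a_{s_{\ell},0})_{\ell}$ sit at strictly increasing lex positions (the first coordinates $\ell$, respectively $s_{\ell}$, are strictly increasing). By the lex-indiscernibility of $\sequence{a_{m,n}}{m,n<\omega}$ they then have the same type, and the bijection $a_{\ell,i(\ell)}\mapsto a_{s_{\ell},0}$ is a partial elementary map; extending by the homogeneity of $\C$ gives some $\tau\in\Aut(\C)$.

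Then I would set $b_{i}':=\tau(b_{i})$ for each $i<\omega$. Since $\tau\in\Aut(\C)$, the tuple $(b_{i}')_{i<\omega}$ has the same type over $\emptyset$ as $(b_{i})_{i<\omega}$, so $\set{\varphi(x,b_{i}')}{i<\omega}$ is $k$-inconsistent. For each $\ell$, $\varphi(a_{s_{\ell},0},b_{i(\ell)}')$ is equivalent via $\tau$ to $\varphi(a_{\ell,i(\ell)},b_{i(\ell)})$, which holds by hypothesis; so $A_{i}\subseteq\varphi(\C,b_{i}')$ for every $i$, which is exactly what Fact \ref{fact:Criterion for TP2} requires.

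The only subtle point is the combinatorial bookkeeping that keeps both tuples at strictly increasing lex positions. The trick is to label each preimage with first coordinate equal to its rank $\ell$ in $S$, rather than retaining the first coordinate it had in $A$; this way the lex order on preimage positions is controlled by $\ell$ alone and automatically matches the order on the target positions $(s_{\ell},0)$. Without this choice, the relative lex order of the preimages would depend uncontrollably on the $S_{i}$'s and lex-indiscernibility would not be enough to produce the required elementary map.
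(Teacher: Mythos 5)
Your proof is correct and follows essentially the same route as the paper: both verify the hypothesis of Fact \ref{fact:Criterion for TP2} for the column $A=\set{a_{n,0}}{n<\omega}$ and use lex-indiscernibility to match the given disjoint subsets of $A$ with a configuration inside the array covered by the $k$-inconsistent family. The only cosmetic differences are that you transport the original $b_m$'s by an automorphism (rather than finding new $b_m'$'s from an equality of types) and re-index the chosen array elements by rank instead of keeping their original first coordinates.
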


\begin{proof}[Proof of Corollary]
 We show that $A=\set{a_{n,0}}{n<\omega}$ (the $0$'th column) has
the property of Fact \ref{fact:Criterion for TP2}. Suppose that $\sequence{A_{m}}{m<\omega}$
is a sequence of disjoint subsets of $A$. For each $m<\omega$, let
$\bar{a}_{m}$ enumerate $\sequence{a_{n,0}}{n<\omega,a_{n,0}\in A_{m}}$
and let $\bar{a}_{m}'$ enumerate $\sequence{a_{n,m}}{n<\omega,a_{n,0}\in A_{m}}$
(both in increasing order according to $n$). By indiscernibility,
$\sequence{\bar{a}_{m}'}{m<\omega}\equiv\sequence{\bar{a}_{m}}{m<\omega}$.
Hence there are $b_{m}'$ for $m<\omega$ such that $\sequence{b_{m}\bar{a}_{m}'}{m<\omega}\equiv\sequence{b_{m}'\bar{a}_{m}}{m<\omega}$.
These will satisfy the conditions of Fact \ref{fact:Criterion for TP2}. 
\end{proof}
Finally we are ready to prove the theorem. 
\begin{proof}[Proof of Theorem \ref{thm:finite alternation in NTP2}]
For notational simplicity, write $a_{i}$ for $a_{2i}$ and $c_{i}$
for $a_{2i+1}$. 

Assume towards contradiction that the conclusion does not hold. We
may assume, restricting to a subsequence, that for every $i<\omega$,
$\varphi\left(c_{i},b\right)$ holds (note that $\neg\varphi\left(a_{i},b\right)$
holds for all $i<\omega$ by the definition of dividing). Applying
Ramsey and compactness, we may assume that $\sequence{a_{i}c_{i}}{i<\omega}$
is indiscernible over $b$. Next, we may assume its order type is
$\omega\x\omega$ (ordered lexicographically), so we have $\sequence{a_{n,m}c_{n,m}}{n,m<\omega}$.
Let $\bar{a}=\sequence{a_{n,m}}{n,m<\omega}$ and similarly define
$\bar{c}$. 

For $k<\omega$, let $\bar{a}_{k}=\sequence{a_{n,k}}{n<\omega}$ (the
$k$'th column in $\bar{a}$), and similarly, let $\bar{c}_{k}=\sequence{c_{n,k}}{n<\omega}$.
Then for all $N<\omega$, $\sequence{\bar{a}_{k}}{k\leq N}+\left\langle \bar{c}_{N}\right\rangle +\sequence{\bar{a}_{k}}{N<k}$
is indiscernible. 

Let $\bar{b}=\sequence{b_{j}}{i<\omega}$ witness that $\varphi\left(x,b\right)$
divides and even $r$-divides over $\bar{a}$. For each $k<\omega$,
find $\bar{c}^{k}$ such that $b_{k}\bar{c}^{k}\equiv_{\bar{a}}b\bar{c}_{k}$.
In particular, $\sequence{\bar{a}_{k}}{k\leq N}+\left\langle \bar{c}^{N}\right\rangle +\sequence{\bar{a}_{k}}{N<k}$
is indiscernible for all $N<\omega$. Note that $\bar{c}^{k}b_{k}\equiv\bar{c}_{0}b$
for all $k<\omega$ because $\bar{c}$ is $b$-indiscernible. Using
Ramsey and compactness again, find an indiscernible sequence $\sequence{\bar{e}_{k}\bar{f}_{k}}{k<\omega}$
realizing the EM-type of $\sequence{\bar{a}_{k}\bar{c}^{k}}{k<\omega}$. 

We still have that $\sequence{\bar{e}_{k}}{k\leq N}+\left\langle \bar{f}_{N}\right\rangle +\sequence{\bar{e}_{k}}{k>N}$
is indiscernible for all $N$. By $\omega$-resilience and Remark
\ref{rem:Get an array}(3), there is an array $\sequence{\bar{h}_{n,m}}{n,m<\omega}$
such that for every $n<\omega$, $\sequence{\bar{h}_{n,m}}{m<\omega}\equiv\sequence{\bar{f}_{k}}{k<\omega}$
and for every $\eta:\omega\to\omega$, $\sequence{\bar{h}_{n,\eta\left(n\right)}}{n<\omega}\equiv\sequence{\bar{e}_{k}}{k<\omega}\equiv\sequence{\bar{a}_{k}}{k<\omega}$. 

For each $n,m$ we can find $b_{n,m}$ such that $\bar{h}_{n,m}b_{n,m}\equiv\bar{c}_{0}b$
and $\set{\varphi\left(x,b_{n,m}\right)}{m<\omega}$ is $r$-inconsistent
(because this is a closed condition which holds for the sequence $\sequence{\bar{c}^{k}}{k<\omega}$).
By Ramsey and compactness we may assume that the whole array $\sequence{\bar{h}_{n,m}b_{n,m}}{n,m<\omega}$
is indiscernible in the sense that the rows are mutually indiscernible,
and even that the sequence of rows 
\[
\sequence{\sequence{\bar{h}_{n,m}b_{n,m}}{m<\omega}}{n<\omega}
\]
 is itself indiscernible.

By NTP$_{2}$, there is some $\eta:\omega\to\omega$ such that $\set{\varphi\left(x,b_{n,\eta\left(n\right)}\right)}{n<\omega}$
is inconsistent, so by indiscernibility, this is true for $\eta$
being constantly $0$ and hence it is $l$-inconsistent for some $l$.
As the sequence $\sequence{\bar{h}_{k,0}}{k<\omega}\equiv\sequence{\bar{a}_{k}}{k<\omega}$,
we can find $b_{k}'$ such that $\set{\varphi\left(x,b_{k}'\right)}{k<\omega}$
is $l$-inconsistent and $\varphi\left(a_{n,k},b_{k}'\right)$ holds
for all $n<\omega$. 

However this contradicts NTP$_{2}$ by Corollary \ref{cor:better criterion for TP2}.
\end{proof}
\begin{cor}
\label{cor:dividing IP} Suppose that $T$ is $\omega$-resilient
and NTP$_{2}$. Then the following is impossible: 

There exists an infinite set $A$, a formula $\varphi\left(x,y\right)$
and some $k<\omega$ such that for every subset $s\subseteq A$, there
is some $b_{s}$ such that $\varphi\left(x,b_{s}\right)$ $k$-divides
over $A\backslash s$ and $s\subseteq\varphi\left(\C,b_{s}\right)$.
\end{cor}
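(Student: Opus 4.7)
The plan is to derive the corollary directly from Theorem \ref{thm:finite alternation in NTP2} by a standard extraction argument. Suppose toward contradiction that $A$, $\varphi(x,y)$ and $k$ witness the configuration described. Starting from an enumeration of an infinite subset of $A$ (after enlarging $A$ by compactness if necessary to apply Erd\H{o}s-Rado cleanly), a routine Ramsey extraction produces an infinite indiscernible sequence $I = \sequence{a_i}{i<\omega}$ contained in $A$. The witnessing property descends to $I$: for each $s \subseteq I$, since $s \subseteq A$ the hypothesis supplies $b_s$ with $\varphi(x, b_s)$ $k$-dividing over $A \setminus s$ and $\varphi(a, b_s)$ for $a \in s$; and since $I \setminus s \subseteq A \setminus s$, any $(A\setminus s)$-indiscernible sequence witnessing $k$-dividing is in particular $(I \setminus s)$-indiscernible, so $\varphi(x,b_s)$ still $k$-divides over $I \setminus s$.

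Now specialize to the subset $s := \set{a_{2i+1}}{i<\omega}$ of $I$ and set $b := b_s$. Then $\varphi(a_{2i+1}, b)$ holds for every $i<\omega$, while $\varphi(x,b)$ divides over $\set{a_{2i}}{i<\omega}$. Since $\sequence{a_i}{i<\omega}$ is indiscernible, Theorem \ref{thm:finite alternation in NTP2} applies and yields that $\neg\varphi(a_i,b)$ holds for all but finitely many $i<\omega$. In particular $\neg\varphi(a_{2i+1},b)$ for cofinitely many $i$, which directly contradicts our choice of $b$.

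No serious obstacle arises; the argument is essentially a matter of selecting the correct subset $s$ to input to Theorem \ref{thm:finite alternation in NTP2}. The only point worth being explicit about is the monotonicity used in the extraction step, namely that $k$-dividing over a larger base implies $k$-dividing over any smaller base, which is immediate from the definition. Note that $\omega$-resilience and $\NTPT$ are used only through the appeal to Theorem \ref{thm:finite alternation in NTP2}.
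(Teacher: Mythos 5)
There is a genuine gap at the extraction step, and it sits exactly where the whole content of this corollary's proof lies. You claim that a ``routine Ramsey extraction'' (possibly after enlarging $A$ by compactness) produces an infinite indiscernible sequence $I$ \emph{contained in} $A$, and your argument depends essentially on this containment: you invoke the hypothesis for the subset $s=\set{a_{2i+1}}{i<\omega}$ of $I$ precisely because $s\subseteq A$, and you use $I\setminus s\subseteq A\setminus s$ for the dividing. But no Ramsey or Erd\H{o}s--Rado argument yields an infinite indiscernible \emph{subsequence} of an arbitrary infinite set: Erd\H{o}s--Rado gives homogeneity only for a fixed finite arity and finitely many formulas at a time, and the standard extraction produces a \emph{new} sequence realizing the EM-type of an enumeration of $A$, whose elements in general lie outside $A$. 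Obtaining an infinite subsequence of $A$ indiscernible with respect to all formulas amounts to a partition relation of the form $\kappa\to\left(\omega\right)^{<\omega}_{2^{\aleph_{0}}}$, i.e., an Erd\H{o}s-type cardinal, which is not provable in ZFC for any $\kappa$; so enlarging $A$ by compactness (which is itself legitimate, since the configuration is type-definable) does not help. Once $I$ is merely an EM-sequence, the hypothesis, which quantifies over subsets of $A$, no longer supplies any $b_s$ for subsets of $I$, and your proof stops there.

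The repair is what the paper actually does: take $\bar{a}'=\sequence{a_{i}'}{i<\omega}$ indiscernible with the same EM-type as an enumeration $\bar{a}$ of $A$, and transfer by compactness only the \emph{single} instance needed, namely the existence of one $b$ (together with a witnessing sequence) such that $\varphi\left(x,b\right)$ $k$-divides over $\set{a_{2i}'}{i<\omega}$ and $\varphi\left(a_{2i+1}',b\right)$ holds for all $i$. This instance is type-definable over $\bar{a}'$ (finitely many positive instances at odd-indexed elements, plus a $k$-inconsistent sequence starting with $b$ and indiscernible over the even-indexed elements), and each finite fragment of it is expressed by a formula that holds of \emph{every} increasing tuple from $\bar{a}$: apply the hypothesis to the finite subset $s\subseteq A$ consisting of the coordinates occupying odd positions, and use the monotonicity of $k$-dividing that you correctly noted. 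Hence each fragment belongs to the EM-type and holds of the corresponding tuple from $\bar{a}'$, so compactness gives $b$. Note that the full ``for every subset $s$'' hypothesis is not type-definable and does \emph{not} transfer to $\bar{a}'$; only such single instances do. With this replacement, your concluding appeal to Theorem \ref{thm:finite alternation in NTP2} (evens as the base of dividing, odds satisfying $\varphi$) coincides with the paper's.
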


\begin{proof}
Suppose that there are such a set $A$, a formula $\varphi\left(x,y\right)$
and $k$. Without loss of generality, $A$ is countable. Enumerate
$A$ as $\bar{a}=\sequence{a_{i}}{i<\omega}$. Let $\bar{a}'=\sequence{a_{i}'}{i<\omega}$
be an indiscernible sequence realizing the EM-type of $\bar{a}$.
Then there is some $b$ such that $\varphi\left(x,b\right)$ $k$-divides
over $\set{a_{2i}'}{i<\omega}$ and $\varphi\left(a_{2i+1},b\right)$
holds for all $i<\omega$. This contradicts Theorem \ref{thm:finite alternation in NTP2}. 
\end{proof}

We end this section with an open problem which we find extremely nice.
\begin{question}
Suppose that $T$ is NTP$_{2}$, that $\sequence{b_{i}}{i<\omega}$
is a Morley sequence (in the sense of non-forking) over a model $M$,
and that $\varphi\left(x,b_{0}\right)$ divides over $M$. Is it true
that $\set{\varphi\left(x,b_{2i}\right)\wedge\neg\varphi\left(x,b_{2i+1}\right)}{i<\omega}$
is inconsistent? 
\end{question}

\section{\label{sec:On-singular-local}On singular local character in NIP}

Here we prove a theorem on what we call singular local character in
the setting of NIP. The idea is that local character for non-forking
fails for general NIP theories, but we can still recover some version
of it over sets of singular cardinality. This is a new result for
NIP, but of course it holds for simple theories by the definition
of simplicity. This raises a natural question on NTP$_{2}$ (see Question
\ref{que:generalizing theorem to NTP2}). 
\begin{thm}
\label{thm:singular local character over one model}Suppose that $T$
is NIP. Suppose that  $A\subseteq\C$ is a small set of cardinality
$\mu$ where, $\left|T\right|<\cof\left(\mu\right)=\kappa<\mu$. Then
for every (finitary) type $p\left(x\right)\in S\left(A\right)$ there
is some $B\subseteq A$ of cardinality $<\mu$ such that $p$ does
not divide over $B$. 

If $A$ is a model $M$, then there is some model $N\prec M$ with
$\left|N\right|<\mu$ such that $p$ is finitely satisfiable in $N$.
\end{thm}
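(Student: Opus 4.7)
The plan is to argue by contradiction using a filtration, pigeonhole, and an extraction of a suitable indiscernible array. Fix an increasing continuous union $A=\bigcup_{i<\kappa}A_{i}$ with $|A_{i}|<\mu$ and suppose $p\in S(A)$ divides over every $A_{i}$. For each $i<\kappa$ pick $\varphi_{i}(x,d_{i})\in p$ which $k_{i}$-divides over $A_{i}$; since $\kappa>|T|$ is regular and equals $\cof(\mu)$, a pigeonhole on $(\varphi_{i},k_{i})$ reduces to a single $\varphi$ and $k$, and a club argument using $d_{i}\in A$ reindexes so that $d_{i}\in A_{i+1}$, whence $\varphi(x,d_{i})$ $k$-divides over $\{d_{j}:j<i\}$. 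Let $c\models p$, so $\varphi(c,d_{i})$ holds for every $i<\kappa$.

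Next I would extract an appropriate array. For each $i<\omega$ fix a $\{d_{j}:j<i\}$-indiscernible $k$-dividing witness $\langle d_{i}^{(m)}:m\in\Zz\rangle$ with $d_{i}^{(0)}=d_{i}$. Iterated Ramsey plus compactness applied to this data yields an array $\langle b_{n}^{(m)}:n<\omega,\,m\in\Zz\rangle$ with the following properties: (i) each row $\langle b_{n}^{(m)}:m\in\Zz\rangle$ is $\{b_{j}^{(0)}:j<n\}$-indiscernible with $\{\varphi(x,b_{n}^{(m)}):m\in\Zz\}$ being $k$-inconsistent; (ii) $\varphi(c,b_{n}^{(0)})$ holds for all $n$; and (iii) the rows are mutually indiscernible (each row is $\Zz$-indexed indiscernible over the union of the other rows), so that independent row-wise $\Zz$-shifts compose, by compactness, into automorphisms of $\C$.

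NIP now forces a contradiction. Finite alternation on each row with the external parameter $c$, combined with $k$-inconsistency, forces the set $P_{n}:=\{m\in\Zz:\models\varphi(c,b_{n}^{(m)})\}$ to be finite; since $0\in P_{n}$, one can pick $m_{n}^{*}\in\Zz$ with $m_{n}^{*},-m_{n}^{*}\notin P_{n}$. For each $s\subseteq\omega$, property (iii) yields $\sigma_{s}\in\Aut(\C)$ realizing the row-wise shift by $\eta_{s}(n):=0$ for $n\in s$ and $m_{n}^{*}$ otherwise. Setting $c_{s}:=\sigma_{s}(c)$ and using $\sigma_{s}^{-1}(b_{n}^{(0)})=b_{n}^{(-\eta_{s}(n))}$ gives $\varphi(c_{s},b_{n}^{(0)})\iff\varphi(c,b_{n}^{(-\eta_{s}(n))})\iff-\eta_{s}(n)\in P_{n}\iff n\in s$, witnessing IP for $\varphi$ on the indiscernible sequence $\langle b_{n}^{(0)}:n<\omega\rangle$ and contradicting NIP. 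The main technical obstacle is arranging (iii) simultaneously with (i) and (ii); this is the combinatorial heart of the argument and is carried out by an iteration of Ramsey and compactness on the nested indiscernibility data.

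For the model part the same scheme runs with $M_{i}\prec M$ in place of $A_{i}$ and with the witness strengthened to ``$\varphi(M_{i},d_{i})=\emptyset$,'' reflecting that $p$ is not finitely satisfiable in $M_{i}$. Non-realization in $M_{i}$ is stronger than dividing over $M_{i}$ and survives the extraction when enough of each $M_{i}$ is included as parameters, allowing the same IP-derivation to yield the stronger conclusion that $p$ is finitely satisfiable in some elementary submodel $N\prec M$ of cardinality $<\mu$.
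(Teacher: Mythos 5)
Your overall strategy proves something strictly stronger than the theorem, and that stronger statement is false; the error is localized in the extraction step (iii). Concretely, you fix a filtration $A=\bigcup_{i<\kappa}A_{i}$ and aim to contradict NIP from the assumption that $p$ divides over \emph{every} $A_{i}$; if this worked, it would show that $p$ does not divide over some member of the filtration itself. Already in DLO this fails: take a cut whose left and right cofinalities are both $\kappa=\cof\left(\mu\right)$, let $p$ be the cut type over a set $A$ of size $\mu$, and choose the filtration so that no $A_{i}$ is cofinal in the left part nor coinitial in the right part; then $p$ divides over every $A_{i}$ (a formula $a<x<b$ with $\left[a,b\right]\cap A_{i}=\emptyset$ witnesses $2$-dividing), yet DLO is NIP, and the correct $B$ (a cofinal-coinitial set of size $\kappa$) meets cofinally many $A_{i}$'s rather than being one of them. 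Your data before extraction \textemdash{} a fixed $\varphi$, a chain $\sequence{d_{n}}{n<\omega}$ with $\varphi\left(x,d_{n}\right)$ $k$-dividing over $d_{<n}$ and $\set{\varphi\left(x,d_{n}\right)}{n<\omega}$ consistent \textemdash{} is exactly realized by the nested shrinking intervals around that cut, so no contradiction with NIP can be derived from it. The step that breaks is the claim that Ramsey and compactness yield an array with mutually indiscernible $k$-inconsistent rows \emph{and} a consistent path: such an array is (for a single $\varphi$ and $k$) a TP$_{2}$-pattern, hence cannot exist in any NIP (indeed NTP$_{2}$) theory, DLO included. The obstruction is that each dividing witness is only indiscernible over the small base $d_{<n}$; re-extracting it to be indiscernible over the other rows destroys its relation to the realization $c$, and "the path is consistent" is not a property of the original rows that survives this operation. (Given the array, your shift-automorphism argument is also more elaborate than needed, since mutual indiscernibility already makes all paths consistent.)

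The paper's proof avoids this trap: it does not try to land on an $A_{i}$, but builds $B=EA_{i_{0}}$ where $E=\bigcup_{i<\kappa}E_{i}$ is assembled from sets of size $\leq\left|T\right|$ spread cofinally through the filtration. The mechanism is a maximal sequence $\bar{c}$ of pairs $\left(c_{\alpha}^{0},c_{\alpha}^{1}\right)$ with types finitely satisfiable in some $A_{i}$ and separated by $d$ (NIP via Fodor bounds its length by $\left|T\right|$), whence $\tp\left(d/A\bar{c}\right)$ is weakly orthogonal to every type over $A\bar{c}$ finitely satisfiable in some $A_{i}$; compactness then yields the small sets $E_{i}$ with $\tp\left(d/E_{i}\bar{c}\right)\vdash\tp\left(d/A_{i}\bar{c}\right)$. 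Your treatment of the model case is likewise only a sketch: non-realization of $\varphi\left(x,d_{i}\right)$ in $M_{i}$ gives no dividing-type configuration to extract from, whereas the paper gets finite satisfiability in $N$ by a different device, using that $\tp\left(\bar{c}/A\right)$ is finitely satisfiable in $A_{i_{0}}$ to move $\bar{c}$ into $N$ while preserving the implication $\psi\left(x,e,\bar{c}\right)\vdash\varphi\left(x,f\right)$.
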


\begin{proof}
We start with the first statement. We use the same ideas as in \cite[Theorem 2.12]{Sh900}.
Write $A=\bigcup\set{A_{i}}{i<\kappa}$ where $\left|A_{i}\right|<\mu$
for all $i<\kappa$ and $A_{i}\subseteq A_{j}$ for $i\le j$. 

Fix some $d\models p$. Let $X$ be the set of sequences $\bar{c}=\sequence{\left(c_{\alpha}^{0},c_{\alpha}^{1}\right)}{\alpha<\gamma_{\bar{c}}}$
with $\gamma_{\bar{c}}\leq\left|T\right|^{+}$ such that, letting
$c_{\alpha}=\left(c_{\alpha}^{0},c_{\alpha}^{1}\right)$:
\begin{enumerate}
\item For every $\alpha<\gamma_{\bar{c}}$, $c_{\alpha}^{0}\equiv_{A\bar{c}_{<\alpha}}c_{\alpha}^{1}$.
\item $c_{\alpha}^{0}d\not\equiv_{A\bar{c}_{<\alpha}}c_{\alpha}^{1}d$.
\item For some $i<\kappa$, $\tp\left(c_{\alpha}/A\bar{c}_{<\alpha}\right)$
is finitely satisfiable in $A_{i}$.
\end{enumerate}
We try to construct a maximal element in $X$ of length $<\left|T\right|^{+}$,
i.e., one that cannot be extended to a longer sequence. Suppose we
cannot, i.e., there is $\bar{c}\in X$ with $\gamma_{\bar{c}}=\left|T\right|^{+}$.
For each $\alpha<\left|T\right|^{+}$, there is a formula $\varphi_{\alpha}\left(x,y,z\right)$
over $\emptyset$ and $b_{\alpha}\in A\bar{c}_{<\alpha}$ such that
$\varphi_{\alpha}\left(d,c_{\alpha}^{0},b_{\alpha}\right)\land\neg\varphi_{\alpha}\left(d,c_{\alpha}^{1},b_{\alpha}\right)$.
Extracting we may assume that $\varphi_{\alpha}=\varphi$. By Fodor's
lemma (applied to the function taking any $\alpha<\left|T\right|^{+}$
to the minimal $\varepsilon<\alpha$ such that $b_{\alpha}\in A\bar{c}_{\leq\varepsilon}$),
for some $\beta<\left|T\right|^{+}$ there is a stationary subset
$S$ of $\left|T\right|^{+}\backslash\beta$ such that for all $\alpha\in S$,
$b_{\alpha}\in A\bar{c}_{<\beta}$. 

Now note that for any $\eta:S\to2$, $\tp\left(\sequence{c_{\alpha}^{\eta\left(\alpha\right)}}{\alpha\in S}/A\bar{c}_{<\beta}\right)$
is constant regardless of $\eta$. Indeed, prove by induction that
this is true for all finite subsets of $S$. Given $\alpha_{0}<\cdots<\alpha_{n}$
from $S$ and $\eta:\set{\alpha_{i}}{i\leq n}\to2$, let $\eta'\left(\alpha_{i}\right)=\eta\left(\alpha_{i}\right)$
for all $i<n$ and $\eta'\left(\alpha_{n}\right)=0$. Then $\tp\left(\sequence{c_{\alpha_{i}}^{\eta\left(\alpha_{i}\right)}}{i\leq n}/A\bar{c}_{<\beta}\right)=\tp\left(\sequence{c_{\alpha_{i}}^{\eta'\left(\alpha_{i}\right)}}{i\leq n}/A\bar{c}_{<\beta}\right)$
by property (1) in the definition of $X$, so we may assume that $\eta\left(\alpha_{n}\right)=0$.
Now, if $\psi\left(c_{\alpha_{0}}^{\eta\left(\alpha_{0}\right)},\ldots,c_{\alpha_{n-1}}^{\eta\left(\alpha_{n-1}\right)},c_{\alpha_{n}}^{0}\right)$
and $\neg\psi\left(c_{\alpha_{0}}^{0},\ldots,c_{\alpha_{n-1}}^{0},c_{\alpha_{n}}^{0}\right)$
hold for $\psi$ over $A\bar{c}_{<\beta}$, then, by property (3)
in the definition of $X$, there is $d\in A$ such that $\psi\left(c_{\alpha_{0}}^{\eta\left(\alpha_{0}\right)},\ldots,c_{\alpha_{n-1}}^{\eta\left(\alpha_{n-1}\right)},d\right)$
and $\neg\psi\left(c_{\alpha_{0}}^{0},\ldots,c_{\alpha_{n-1}}^{0},d\right)$
hold, contradicting the induction hypothesis.

Hence we get that for any subset $s\subseteq S$, $\set{\varphi\left(x,c_{\alpha}^{0},b_{\alpha}\right)}{\alpha\in s}\cup\set{\neg\varphi\left(x,c_{\alpha}^{0},b_{\alpha}\right)}{\alpha\notin s}$
is consistent. But that clearly gives us IP for the formula $\varphi$. 

Let $\bar{c}\in X$ be maximal of length $<\left|T\right|^{+}$. 

We get that $r\left(x\right)=\tp\left(d/A\bar{c}\right)$ is weakly
orthogonal to any type $q\left(y\right)\in S\left(A\bar{c}\right)$
which is finitely satisfiable in some $A_{i}$ for $i<\kappa$, in
the sense that $q\cup r$ implies a complete type over $A\bar{c}$:
assume $e_{1},e_{2}\models q$ and $de_{1}\not\equiv_{A\bar{c}}de_{2}$.
Take a global finitely satisfiable in $A_{i}$ extension $q'$ of
$q$, and let $e'\models q'|A\bar{c}de_{1}e_{2}$. Then for one of
$i=1,2$, $de'\not\equiv_{A\bar{c}}de_{i}$. Rename $e_{1}=e_{i}$
and $e_{2}=e'$. Thus, $\tp\left(e_{2}/A\bar{c}de_{1}\right)$ is
finitely satisfiable in $A_{i}$ and in particular $\tp\left(e_{1}e_{2}/A\bar{c}\right)$
is finitely satisfiable in $A_{i}$, contradicting the maximality
of $\bar{c}$. 

Fix some formula $\varphi\left(x,y\right)$ over $A\bar{c}$. For
$i<\kappa$, let $Y_{i}$ be the set of all $q\left(y\right)\in S\left(A\bar{c}\right)$
finitely satisfiable in $A_{i}$ such that $r\left(x\right)\cup q\left(y\right)\models\varphi\left(x,y\right)$.
For each $q\in Y_{i}$ there is a formula $\zeta_{q}\in q$ and a
formula $\psi_{q}\in r$ such that $\psi_{q}\wedge\zeta_{q}\vdash\varphi$.
This implies that $Y_{i}$ is open in the space of all types over
$A\bar{c}$ finitely satisfiable in $A_{i}$, which is itself closed
in the space of all types over $A\bar{c}$. By weak orthogonality
its complement is precisely those types $q\in S\left(A\bar{c}\right)$
finitely satisfiable in $A_{i}$ such that $r\left(x\right)\cup q\left(y\right)\vdash\neg\varphi\left(x,y\right)$
so it is also open there. Thus, $Y_{i}$ is clopen in this space,
so compact. By compactness there are formulas $\zeta\left(y\right)$
and $\psi\in r$ such that $\psi\wedge\zeta\vdash\varphi$ and $\zeta$
covers $Y_{i}$. Let $E_{i}\subseteq A$ be the set of parameters
from $A$ appearing in all these formulas $\psi$ when we run over
all formulas $\varphi\left(x,y\right)$. Then $\left|E_{i}\right|\leq\left|T\right|$
and $\tp\left(d/E_{i}\bar{c}\right)\vdash\tp\left(d/A_{i}\bar{c}\right)$
because every realized type of a tuple in $A_{i}$ is in $Y_{i}$.

Note that $\tp\left(\bar{c}/A\right)$ is finitely satisfiable in
some $A_{i_{0}}$ for $i_{0}<\kappa$, because we assumed that $\kappa\geq\left|T\right|^{+}$.
Let $E=\bigcup_{i<\kappa}E_{i}$. Then $\left|E\right|<\mu$ and $p=\tp\left(d/A\right)$
does not divide over $EA_{i_{0}}\subseteq A$: suppose that $\varphi\left(x,f\right)\in p$
divides over $EA_{i_{0}}$ where $f\in A_{i}$ for some $i<\kappa$
and $\varphi$ is over $EA_{i_{0}}$. Note that as $\bar{c}\ind_{A_{i_{0}}}^{fs}Ef$
(i.e., $\tp\left(\bar{c}/A_{i_{0}}Ef\right)$ is finitely satisfiable
in $A_{i_{0}}$), $\varphi\left(x,f\right)$ also divides over $EA_{i_{0}}\bar{c}$
(any indiscernible sequence $I$ starting with $f$ over $EA_{i_{0}}$
can be moved by an automorphism so that it is also indiscernible over
$EA_{i_{0}}\bar{c}$ by ensuring that $\bar{c}\ind_{A_{i_{0}}}^{fs}EI$).
Now, $\tp\left(d/E_{i}\bar{c}\right)\vdash\varphi\left(x,f\right)$
so some formula $\psi\left(x,e,\bar{c}\right)\in\tp\left(d/E_{i}\bar{c}\right)$
divides over $EA_{i_{0}}\bar{c}$ and in particular over $E_{i}\bar{c}$
which is impossible. 

Now suppose that $A=M$ is a model. Find a model $N\prec M$ containing
$EA_{i_{0}}$ of size $\leq\left|T\right|+\left|A_{i_{0}}E\right|$.
We claim that $p$ is finitely satisfiable in $N$. Suppose that
$\varphi\left(x,f\right)\in p$. Let $\psi\left(x,e,\bar{c}\right)\in\tp\left(d/E\bar{c}\right)$
be such that $\psi\left(x,e,\bar{c}\right)\vdash\varphi\left(x,f\right)$.
Then 
\[
\C\models\exists x\psi\left(x,e,\bar{c}\right)\land\forall x\left(\psi\left(x,e,\bar{c}\right)\to\varphi\left(x,f\right)\right).
\]
 As $\bar{c}\ind_{A_{i_{0}}}^{fs}Ef$, there is some $\bar{c}'\in N$
such that 
\[
\C\models\exists x\psi\left(x,e,\bar{c}'\right)\land\forall x\left(\psi\left(x,e,\bar{c}'\right)\to\varphi\left(x,f\right)\right),
\]
and as $N\prec\C$, $N\models\exists x\psi\left(x,e,\bar{c}'\right)$,
so there is some $d'\in N$ such that $\psi\left(d',e,\bar{c}'\right)$
holds. Hence, $\varphi\left(d',f\right)$ holds as well.
\end{proof}
The following question seems natural. 
\begin{question}
\label{que:generalizing theorem to NTP2}Is Theorem \ref{thm:singular local character over one model}
true for NTP$_{2}$? Namely, suppose that $T$ is NTP$_{2}$, that
$M\models T$, $\left|T\right|<\cof\left(\left|M\right|\right)<\left|M\right|$
and that $p\left(x\right)\in S\left(M\right)$. Is there some $N\prec M$
over which $p$ does not fork? The same question can be asked with
$M$ being a set (and forking replaced by dividing; note that the
version over sets imply the version over models since forking equals
dividing over models in NTP$_{2}$). 
\end{question}

We now move to global types.

\begin{lem}
\label{lem:countable set outside of dom(p)}Suppose that $T$ is NIP
and that $p\left(x\right)\in S\left(\C\right)$ is a global type which
is finitely satisfiable in a model $M$ such that $\mu=\left|M\right|$
is singular with $\left|T\right|<\kappa=\cof\left(\mu\right)<\mu$.
Then for any countable set $A\subseteq\C$ there there is some $M_{0}\prec M$
of size $<\mu$ such that $p\restriction\left(A\cup M\right)$ is
finitely satisfiable in $M_{0}$. 
\end{lem}

Note that this implies the second part of Theorem \ref{thm:singular local character over one model}
(by taking $A=\emptyset$). 
\begin{proof}
This follows readily from Theorem \ref{thm:singular local character over one model}
applied to a countable reduct of $M^{Sh}$ but we elaborate.

Let $N\succ M$ be an $\left|M\right|^{+}$-saturated model containing
$A$. Let $M^{Sh}$ be the Shelah expansion of $M$ in the language
$L^{Sh}$, i.e., add predicates of the form $R_{\varphi\left(z,c\right)}\left(z\right)$
for every formula $\varphi\left(z,y\right)$ over $\emptyset$ and
$c\in N$, and interpret $R_{\varphi\left(z,c\right)}$ as $\varphi\left(M,c\right)$.
By \cite[Corollary 3.24]{Sh715,pierrebook}, $M^{Sh}$ is NIP. However,
note that $L^{Sh}$ is of size $\left|N\right|+\left|T\right|$. Let
$L^{A}$ be $\set{R_{\varphi\left(z,c\right)}}{\varphi\left(z,y\right)\in L,c\in A}$
and let $M^{A}$ be the reduct of $M^{Sh}$ to $L^{A}$ and $T^{A}=\Th\left(M^{A}\right)$.
Then $\left|T^{A}\right|=\left|T\right|$ because $A$ is countable.
The set 
\[
\set{R_{\varphi\left(x,y,c\right)}\left(x,m\right)}{\varphi\left(x,y,z\right)\in L,\varphi\left(x,m,c\right)\in p,m\in M,c\in A}
\]
 of formulas over $M^{A}$ is consistent because $p$ is finitely
satisfiable in $M$ and let $p^{A}$ be any complete type over $M^{A}$
containing it.  Applying the second part of Theorem \ref{thm:singular local character over one model}
to $M^{A}$ and $p^{A}$(which we are allowed to do since $\left|T^{A}\right|<\cof\left(\mu\right)<\mu$)
we find some $M_{0}'\prec M^{A}$ such that $p^{A}$ is finitely satisfiable
in $M_{0}'$ and $\left|M_{0}'\right|<\mu$. Let $M_{0}$ be the reduct
of $M_{0}'$ to $L$. Then $p\restriction\left(A\cup M\right)$ is
finitely satisfiable in $M_{0}$: if $\varphi\left(x,m,c\right)\in p$
where $\varphi\left(x,y,z\right)\in L$, $m\in M$ and $c\in A$ then
then $R_{\varphi\left(x,y,c\right)}\left(x,m\right)\in p^{A}$ so
for some $m'\in M_{0}$, $R_{\varphi\left(x,y,c\right)}\left(m',m\right)$
holds in $M^{Sh}$, which exactly means that $\varphi\left(m',m,c\right)$
holds in $N$ (and thus in $\C$).
\end{proof}
\begin{cor}
\label{cor:if p is f.s. in singular}If $T$ is NIP and $p\left(x\right)\in S\left(\C\right)$
is a global (finitary) type which is finitely satisfiable in a model
$M$ such that $\mu=\left|M\right|$ is singular with $\left|T\right|<\cof\left(\mu\right)$,
then $p$ is finitely satisfiable in some model $M_{0}\prec M$ such
that $\left|M_{0}\right|<\mu$. 
\end{cor}

\begin{proof}
Let $I=\sequence{a_{i}}{i<\omega}$ be a Morley sequence of $p$ over
$M$, namely $a_{i}\models p\restriction Ma_{<i}$ for all $i<\omega$.
Let $A=\set{a_{i}}{i<\omega}$ and let $q'=p\restriction\left(A\cup M\right)$.
By Lemma \ref{lem:countable set outside of dom(p)}, there is some
$M_{0}\prec M$ of size $\left|M_{0}\right|<\mu$ such that $q'$
is finitely satisfiable in $M_{0}$. Let $q\left(x\right)\in S\left(\C\right)$
be any global extension of $q'$ which is still finitely satisfiable
in $M_{0}$. Note that both $q,p$ are $M$-invariant (in fact finitely
satisfiable in $M$), and that $I$ is a Morley sequence of $q$ over
$M$. Hence, by \cite[Proposition 2.36]{pierrebook}, $p=q$. Since
$q$ is finitely satisfiable in $M_{0}$, it follows that $p$ is
as well. 
\end{proof}
The following example shows that Corollary \ref{cor:if p is f.s. in singular}
is false in general NTP$_{2}$ or even simple theories, in a very
strong sense.

\begin{example}
\label{exa:the random graph doesn't satisfy} Let $T$ be the theory
of the random graph in the language $\left\{ R\right\} $, and let
$\mu<\left|\C\right|$ be any cardinal. Let $M$ be a model of size
$\mu$. For every $N\prec\C$ of size $\left|N\right|<\mu$, let $v_{N}\in\C$
be such that $v_{N}$ is not connected to any vertex in $N$ but connected
to all vertices in $M\backslash N$. Let $\Gamma=\set{x\mathrela Rv_{N}}{\left|N\right|<\mu}$,
so it is a global partial type. Then $\Gamma$ is finitely satisfiable
in $M$ (since for every finitely many $N$'s, there is some element
in $M$ not in their union). But $\Gamma$ is clearly not finitely
satisfiable in any $N$ of size $\left|N\right|<\mu$. Thus, the same
is true for every extension of $\Gamma$ to a global type which is
finitely satisfiable in $M$. 
\end{example}

The following example, due to Saharon Shelah\footnote{We thank Shelah for allowing us to include this example here.}
shows that also Theorem \ref{thm:singular local character over one model}
does not hold for the random graph.
\begin{example}
\label{exa:Random graph not local character for coheir}Again let
$T$ be the theory of the random graph in the language $\left\{ R\right\} $
and let $\mu$ be some uncountable cardinal. We will construct a model
$M$ of size $\mu$ such that the type $p\left(x\right)=\set{x\mathrela Ra}{a\in M}$
is not finitely satisfiable in every set $C\subseteq M$ such that
$\left|C\right|<\mu$. 

The universe of $M$ is $\set{\left(\alpha,i\right)}{\alpha<\mu,i<\left|\alpha\right|+\aleph_{0}}$,
so clearly $\left|M\right|=\mu$. We construct the edge relation $R^{M}$
by induction. More precisely, for $\beta\leq\mu$, let $M_{\beta}=\set{\left(\alpha,i\right)\in M}{\alpha<\beta}$
and define $R^{M_{\beta}}$ by induction on $\beta$. For $\beta=0$,
$M_{\beta}$ is empty so there is nothing to do. For $\beta$ limit,
let $R^{M_{\beta}}=\bigcup_{\gamma<\beta}R^{M_{\gamma}}$. For $\beta$
of the form $2\alpha+1$ define $R^{M_{\beta}}=R^{M_{2\alpha}}$,
i.e., we do not add new edges. For $\beta$ of the form $2\alpha+2$,
consider the set of pairs $\left(A,B\right)$ where $A,B$ are finite,
disjoint, and contained in $M_{2\alpha+1}$. The number of such pairs
is $\left|M_{2\alpha+1}\right|+\aleph_{0}=\left|\alpha\right|+\aleph_{0}=\left|\set i{\left(2\alpha+1,i\right)\in M_{\beta}}\right|$,
so for each such pair we can choose $i<\left|2\alpha+1\right|+\aleph_{0}$
such that $\left(2\alpha+1,i\right)$ is connected to all vertices
in $A$ and disconnected from all vertices in $B$. This completes
the construction. 

By the construction in the even steps, $M=M_{\mu}$ is a random graph.
Note that for each $\left(\alpha,i\right)$ the number of vertices
connected to it of the form $\left(\beta,j\right)$ for $\beta<\alpha$
is finite. Note also that ({*}) for $\alpha,\beta<\mu$ and $j<\left|\beta\right|+\aleph_{0}$,
$\set{i<\left|2\alpha\right|+\aleph_{0}}{\left(\beta,j\right)\mathrela R\left(2\alpha,i\right)}$
is finite.  Let $C\subseteq M$ be of size $\left|C\right|<\mu$.
For $\alpha<\mu$, let $D_{\alpha}$ be all vertices of the form $\left(\alpha,i\right)$
for $i<\left|\alpha\right|+\aleph_{0}$. Fix some $\alpha>\left|C\right|$.
By ({*}), each vertex in $C$ is connected to at most finitely many
vertices in $D_{2\alpha}$, and as $\left|D_{2\alpha}\right|=\left|\alpha\right|+\aleph_{0}>\left|C\right|$
there is some vertex $v\in D_{2\alpha}$ not connected to any element
from $C$. Thus, the formula $x\mathrela Rv$ is not satisfied in
$C$ and we are done. 
\end{example}

We end with some questions.

A positive answer to the following question seems plausible:
\begin{question}
\label{que:non-forking singular local char}Assume $T$ is NTP$_{2}$
and that $p\left(x\right)\in S\left(\C\right)$ is a global (finitary)
type which does not fork over a model $M$ such that $\mu=\left|M\right|$
is singular with $\left|T\right|<\cof\left(\mu\right)$. Does it follow
that $p$ does not fork over some model $M_{0}\prec M$ such that
$\left|M_{0}\right|<\mu$, or at least over some $M_{0}\prec\C$ of
size $<\mu$? 

A weaker version is the same but assuming that $p$ is finitely satisfiable
in $M$.
\end{question}

Note that the first, stronger version of Question \ref{que:non-forking singular local char}
is not known even in NIP, so we ask:
\begin{question}
\label{prob:global type singular local char}Assume $T$ is NIP and
that $p\left(x\right)\in S\left(\C\right)$ is a global (finitary)
type which is invariant over a model $M$ (equivalently: non-forking
over $M$) such that $\mu=\left|M\right|$ is singular with $\left|T\right|<\cof\left(\mu\right)$.
Does it follow that $p$ is invariant over some model $M_{0}\prec M$
such that $\left|M_{0}\right|<\mu$, or at least that $p$ is invariant
over some $M_{0}\prec\C$ of size $<\mu$?
\end{question}

\bibliographystyle{alpha}
\bibliography{common2}

\end{document}